\newcommand{\Av}{\mathop{\mathrm{Av}}}
\newcommand{\we}{\ensuremath{\simeq}\xspace}
\newcommand{\A}{\ensuremath{{\mathcal A}}\xspace}
\newcommand{\B}{\ensuremath{{\mathcal B}}\xspace}
\newcommand{\C}{\ensuremath{{\mathcal C}}\xspace}
\newcommand{\D}{\ensuremath{{\mathcal D}}\xspace}
\newcommand{\bigclass}{\ensuremath{{\mathcal M}}\xspace}
\newcommand{\archOver}[1]{%
\,
\begin{tikzpicture}[anchor=base,baseline,scale=0.3]
\draw (0,0) node[inner sep=0pt] (a) {$#1$};
\coordinate (A) at ($ (a.south west) + (-0.2,0) $);
\coordinate (B) at ($ (a.north west) + (-0.2,0.2) $);
\coordinate (C) at ($ (a.north east) + (0.2,0.2) $);
\coordinate (D) at ($ (a.south east) + (0.2,0) $);
\draw[rounded corners=4pt] (A) -- (B) -- (C) -- (D);
\end{tikzpicture}
\,
}
\newcommand{\archOverSubscript}[1]{%
\,
\begin{tikzpicture}[anchor=base,baseline,scale=0.2,font=\scriptsize]
\draw (0,0) node[inner sep=0pt] (a) {$#1$};
\coordinate (A) at ($ (a.south west) + (-0.2,0) $);
\coordinate (B) at ($ (a.north west) + (-0.2,0.2) $);
\coordinate (C) at ($ (a.north east) + (0.2,0.2) $);
\coordinate (D) at ($ (a.south east) + (0.2,0) $);
\draw[rounded corners=4pt] (A) -- (B) -- (C) -- (D);
\end{tikzpicture}
\,
}
\newcommand{\emptyArch}{\archOver{\rule{0pt}{1ex}\hspace{1ex}}}
\DeclareMathOperator{\mset}{\textsc{MSet}}
\newtheorem{theorem}{Theorem}
\newtheorem{conjecture}[theorem]{Conjecture}
\newtheorem{proposition}[theorem]{Proposition}
\newtheorem{definition}[theorem]{Definition}
\newtheorem{observation}[theorem]{Observation}
\newtheorem{remark}[theorem]{Remark}
\title{A general theory of Wilf-equivalence for Catalan structures}
\author{Michael Albert \and Mathilde Bouvel}
\date{}
\begin{document}
\maketitle

\begin{abstract}
The existence of apparently coincidental equalities (also called Wilf-equivalences) between the enumeration sequences, or generating functions, of various hereditary classes of combinatorial structures has attracted significant interest. We investigate such coincidences among non-crossing matchings and a variety of other Catalan structures including Dyck paths, 231-avoiding permutations and plane forests. In particular we consider principal classes defined by not containing an occurrence of a single given structure. An easily computed equivalence relation among structures is described such that if two structures are equivalent then the associated principal classes have the same enumeration sequence. We give an asymptotic estimate of the number of equivalence classes of this relation among structures of size $n$ and show that it is exponentially smaller than the $n^{\mbox{\scriptsize th}}$ Catalan number. In other words these ``coincidental'' equalities are in fact very common among principal classes. 
Our results also allow us to prove, in a unified and bijective manner, several known Wilf-equivalences from the literature. 

\end{abstract}

\section{Introduction}

The Catalan numbers are renowned for their ubiquity in problems of combinatorial enumeration. 
A few of the many contexts in which they arise are: plane forests (counted by number of nodes), 
non-crossing matchings or arch systems (counted by number of matched pairs or arches), Dyck paths, and 231-avoiding permutations. 
These contexts share the additional property -- to be detailed in Section~\ref{sec:CatalanStructures} -- 
that each admits a natural substructure relation, and that there are bijections between them which preserve that relationship. 
So, one can further consider those structures of each type which do not contain some designated substructure(s). 
As part of a previous work (see an extended abstract \cite{FPSAC2013},  or \cite{ABSorting}) 
the present authors considered certain coincidences of enumeration (often called \emph{Wilf-equivalences}) 
between such classes of Catalan structures avoiding a given substructure (in our case, permutations avoiding 231 and $\pi$). 
Using a non-standard bijection we were able to explain some of those coincidences. However, when we turned to the more general question:
\begin{quote}
\emph{How many distinct enumeration sequences are there for classes of 231-avoiding permutations defined by a single additional restriction?}
\end{quote}
we were struck by the difference between the computed numbers, and any known general equivalences. 
Specifically it seemed that there were many more such coincidences (and so fewer enumeration sequences) than one might have expected. 
This phenomenon will be explained in the current paper. 
We will show in Section~\ref{sec:cohorts} that although there are $\textrm{Cat}_n = {2n \choose n}/(n+1) \sim (1/\sqrt{\pi} ) n^{-3/2} 4^n$ distinct classes 
of permutations avoiding 231 and an additional permutation of size $n$, 
these classes have asymptotically at most $c n^{-3/2} \gamma^n$ distinct enumeration sequences where $c \approx 1.13$ and $\gamma \approx 2.4975$ 
(these are approximate values only). 

A particularly wide collection of such classes share generating functions derived from the continued fraction representation of $C(t) = \sum \textrm{Cat}_n t^n$, 
the generating function of the Catalan numbers. Since $C = 1/(1 - tC)$ it follows that:
\[
C = \cfrac{1}{1 - \cfrac{t}{1 - \cfrac{t}{1 - \cfrac{t}{1 - \cdots}}}}
\]
This fraction can be truncated after $n$ levels, producing a sequence of generating functions:
\begin{align*}
C_0 &= 1 \\
C_n &= \frac{1}{1-t \, C_{n-1}} \quad \mbox{for $n \geq 1$}
\end{align*}
The functions $C_n$ enumerate many specific subclasses of the Catalan classes above -- 
for instance the 231-avoiding permutations that also avoid a descending permutation of size $n$, 
or the Dyck paths of height at most $n$. 
Other examples can be found in \cites{Mansour:Chebyshev, FPSAC2013}. 
Previously these enumeration coincidences were understood on an analytic (or perhaps more properly arithmetic) level only. 
We can explain them, and many others, bijectively -- among other things we can show, 
combining Propositions~\ref{pr:MotzkinManyInMainCohort}, \ref{pr:GFofMainCohort} and~\ref{pr:easiestToAvoid}:
\begin{quote}
\emph{The number of 231-avoiding permutations, $\pi$, of size $n$ 
for which the generating function of the class of permutations avoiding both 231 and $\pi$ 
is $C_n(t)$ is the $n^{\mbox{\scriptsize{th}}}$ Motzkin number.}
\end{quote}
The proof of this fact also describes (at least in principle) bijections between any two such classes. 
Furthermore, we show that for any other 231-avoiding permutation $\theta$ of size $n$, 
the generating function for 231 and $\theta$-avoiding permutations is dominated (term by term and eventually strictly) by $C_n(t)$.

The main tool in producing these results is a binary relation on Catalan structures defined purely intrinsically by four very simple rules in Section~\ref{sec:equivalenceRelation}. 
This relation induces an equivalence relation $\sim$ on these Catalan structures whose equivalence classes are the connected components of the binary relation. 
Remarkably,  if $A \sim B$ then the collection of structures not containing $A$ has the same generating function as the collection of structures not containing $B$, 
so that one generating function may be associated with each equivalence class of $\sim$. 
For convenience in the description and proofs we will work mostly in the domain of arch systems, 
but of course all the results translate to the other domains directly using the natural bijections of Section~\ref{sec:CatalanStructures}. 
We have been able to verify that through size 15 (where $\sim$ has only 16,709 equivalence classes on the 9,694,845 Catalan structures) 
that $A$ and $B$ are $\sim$-equivalent if and only if the corresponding generating functions are the same. So we have:

\begin{conjecture}
\label{co:cohortDeterminesGF}
The equivalence relation $\sim$ coincides with Wilf-equivalence.
\end{conjecture}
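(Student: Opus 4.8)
Since the implication ``$A \sim B \Rightarrow A$ and $B$ are Wilf-equivalent'' is already established, what is needed is the converse, that Wilf-equivalence forces $\sim$-equivalence. I would first cut the problem down to a single size. If $A$ has size $k$, then every structure of size $<k$ avoids $A$, while $A$ itself does not; hence the generating function of the class avoiding $A$ has the Catalan coefficient in each degree $<k$ and a strictly smaller coefficient in degree $k$. Thus $|A|$ can be read off from the generating function, so Wilf-equivalent structures have equal size, and it suffices to prove that distinct $\sim$-classes of size $n$ have distinct generating functions. Since these avoidance classes are algebraic series, each is determined by finitely many of its coefficients, so at each fixed $n$ the statement is a finite assertion --- indeed exactly the one checked through size $15$; the difficulty is to make the argument uniform in $n$.

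The substantive plan is to produce, for each $\sim$-class, a canonical representative together with an explicit formula for the generating function of its avoidance class, and then to prove that this formula is injective on canonical representatives. Concretely, using the decomposition of an arch system as a sequence of arches each enclosing a sub-system (the decomposition behind $C = 1/(1-tC)$), one sets up, for a fixed forbidden structure $\alpha$, a finite system of algebraic equations that counts the members of the class avoiding $\alpha$ according to how much of $\alpha$ they already realize; the shape of this system is governed by a combinatorial skeleton of $\alpha$ extracted from its ordered-forest structure. The two things to prove are then: (i) each of the four defining rules of $\sim$ alters the skeleton of $\alpha$ only in ways that leave the associated system --- hence the generating function --- unchanged up to relabeling, so that the system is a genuine $\sim$-invariant; and (ii) conversely the skeleton, and therefore the $\sim$-class, can be recovered from the generating function. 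Part (i) should be a careful but essentially routine verification of the rules against the decomposition, with the ``main cohort'' (Propositions~\ref{pr:MotzkinManyInMainCohort}, \ref{pr:GFofMainCohort} and~\ref{pr:easiestToAvoid}) as the guiding example, where an entire Motzkin-sized family collapses to the single rational function $C_n(t)$.

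The hard part is part (ii). There is no general reason why two structurally different algebraic systems cannot have the same solution, and excluding such accidental coincidences among all avoidance generating functions of a given size is precisely the kind of statement for which no technique is currently available --- which is why Conjecture~\ref{co:cohortDeterminesGF} is stated as a conjecture, with the computation through size $15$ ($16{,}709$ classes distinguished among $9{,}694{,}845$ structures) as the present evidence. Short of a complete proof, a reasonable program is to settle controlled subcases: arch systems with a single top-level component, where the skeleton is one rooted tree and the counting system is most transparent; the families with rational generating function, such as the $C_n$; or, from the other side, to bound the number of Wilf-classes of size $n$ from below by the asymptotic count of $\sim$-classes obtained in Section~\ref{sec:cohorts}, which, combined with the already-proven refinement of Wilf-equivalence by $\sim$, would force the two relations to agree.
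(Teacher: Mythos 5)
The statement you are addressing is a conjecture: the paper itself offers no proof of it, only the forward direction (Theorem~\ref{th:mainTheorem}, that $\sim$ refines Wilf-equivalence) together with exhaustive verification through size $15$. Your text is candid that it is a program rather than a proof, and indeed it is not a proof: the decisive step, your part (ii) --- that the generating function of $\Av(A)$ determines the cohort of $A$, i.e.\ that distinct cohorts of the same size never share a generating function --- is exactly the open content of Conjecture~\ref{co:cohortDeterminesGF}, and you leave it untouched. Your part (i) and the reduction to a fixed size are fine but already in the paper: that Wilf-equivalent structures have the same size is noted in Section~2, the invariance of the generating function under the four rules is Theorem~\ref{th:mainTheorem}, the recursive system you describe is Proposition~\ref{pr:classEnumeration}, and the canonical-representative/skeleton picture is the cohort description of Section~\ref{sec:cohorts}. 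So nothing in the proposal advances the conjecture beyond what is proved in the paper.

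One concrete flaw in the fallback you suggest: lower-bounding the number of Wilf-classes of size $n$ by the number of cohorts would indeed, combined with the refinement, force the two partitions to coincide (a refinement with the same number of blocks is an equality), but this lower bound is logically equivalent to the conjecture itself, so it is circular unless you have an independent handle on the number of Wilf-classes; and the asymptotic enumeration of cohorts (Theorem~\ref{THM:enumeration_of_cohorts}) cannot supply the exact per-$n$ count that this argument would require. Likewise, ``each series is determined by finitely many coefficients'' makes the claim decidable at each fixed $n$ (this is how the size-$15$ check and the stronger conjecture at the end of the paper, with its $2n-2$ bound, are formulated), but it gives no uniformity in $n$ and hence no proof. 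In short: correct framing, no gap in what you assert, but the conjecture remains exactly as open after your proposal as before it.
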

In the final section we discuss this conjecture, and further open problems.

In the next section we consider the quartet of Catalan structures, namely arch systems, Dyck paths, plane forests, and 231-avoiding permutations in more detail 
and introduce our basic terminology and notation. 
This is followed by some preparatory results before we introduce the relation $\sim$ and prove its main property, 
namely that it refines Wilf-equivalence in Theorem \ref{th:mainTheorem}. 
We can represent the collection of all $\sim$-equivalence classes, which we call \emph{cohorts}, as a slight modification of the family of non-plane forests 
and this also permits us to determine the number of cohorts in structures of size $n$, both through a functional equation or recurrence and asymptotically. 
We then consider further relationships between the cohorts, 
and the properties of the special main cohort mentioned above -- which is maximal in terms of the associated generating functions 
and also conjecturally in terms of the cardinality of the cohort. 
Finally we consider some open problems that arise from this work.

\section{Arch systems, Dyck paths, plane forests, and 231-avoiding permutations}
\label{sec:CatalanStructures}

Among the most well-known Catalan structures are certainly the Dyck paths. 
A \emph{Dyck path} of semi-length $n$ is a path in the positive quarter-plane, taking steps $u= (1,1)$ and $d= (1,-1)$, starting at $(0,0)$ and ending at $(2n,0)$. 
Steps $u$ and $d$ of a Dyck path may be paired, by associating to each $u$ step the first $d$ step on its right at the same ordinate. 
These pairs $(u,d)$ may also be seen as pairs of opening and closing parentheses, 
and under this correspondence Dyck paths correspond to parentheses word where parentheses are properly matched. 
A subpath of a Dyck path is defined by the deletion of some pairs of steps $(u,d)$ (or equivalently of matched parentheses). 
The deletion here is intended as a \emph{contraction} of the segment of each deleted step into a point, 
so that deleting $k$ pairs of steps in a Dyck path of semi-length $n$ provides a Dyck path of semi-length $n-k$. 

Another natural way of representing proper parentheses words is as non-crossing matchings or arch systems. 
These form a second family of Catalan structures, and will be essential in the presentation of our results. 
An \emph{arch system} of size $n$ is a set of $n$ arches connecting $2n$ points arranged along a baseline, 
such that all arches are above the baseline and no pair of arches cross. 
The left end of each arch encodes an opening parenthesis and its right end the corresponding closing parenthesis. 
A subsystem of an arch system can be obtained simply by deleting some of the original system's arches. 

We can concatenate arch systems, $A$ and $B$ in the obvious way -- just draw the arch system $B$ strictly to the right of $A$ on the same baseline. 
The resulting arch system will be denoted $AB$.

\begin{definition}
An \emph{atom} is a non empty arch system that cannot be written as the concatenation of two non empty arch systems, 
i.e.~one that has a single outermost arch. Atoms will generally be denoted by lower case letters. 
The \emph{contents} of an atom $a$ are the unique arch system, $A$, such that $a$ is obtained by adding a single arch outside all of $A$, and we write $a = \archOver{A}$.
\end{definition}

Since every non empty arch system is a unique concatenation of atoms, 
we see immediately that the generating function for arch systems, $A(t)$ according to the number of arches satisfies:
\begin{align*}
A(t) &= 1 + t \, A(t) + (t \, A(t) )^2 + (t \, A(t) )^3 + \cdots \\
{} &= \frac{1}{1 - t \, A(t)}
\end{align*}
proving that -- and this should be no surprise -- that arch systems are enumerated by the Catalan numbers.

There is a bijection between arch systems with $n$ arches, and non-empty plane forests with $n$ nodes 
obtained simply by mapping each arch to a node in such a way that if one arch lies within another, then its node is a descendant of the other, 
and if it lies to the left of another, then its node does so too. 
Equivalently, describing this recursively: take an arch system $A$, write it as a concatenation of atoms $A = a_1 a_2 \cdots a_m$ 
and associate to it a forest of $m$ trees whose roots, $r_i$,  correspond to the outermost arches of the $a_i$ 
(and are arranged from left to right for $i$ from 1 through $m$) and such that the tree rooted at $r_i$ is (up to the addition of the root $r_i$) the forest of the contents of $a_i$. 
This bijection also preserves the ``substructure'' relationship provided that in the case of forests we maintain ancestry in substructures 
(e.g.~if a child, $x$, of a node, $y$, is deleted, then all the children of $x$ remaining become children of $y$, 
preserving their left to right order both among themselves and with respect to their new siblings).

Finally, we can consider 231-avoiding permutations of $\{1, 2, \dots, n\}$. 
These are those permutations $\pi$ which, when written in one line notation, contain no subsequence $bca$ with $a < b < c$. 
Here the substructure relationship (known as the \emph{pattern} relationship among permutations) 
involves deleting some symbols and then relabelling the remaining ones to form a permutation of $\{1,2,\dots,m\}$ for some $m < n$ 
while maintaining their relative order (e.g.~if we delete 2 from 31254 we obtain 2143). 
It is perhaps not immediately clear that these are also in bijection with Dyck paths, arch systems or plane forests. 
However, these permutations are precisely those that can be sorted by a single pass through a stack \cite{Knuth:Art} 
and we can form a Dyck path by adding a step $u$ whenever pushing an element on to the stack, and a step $d$ whenever popping one from the stack. 
Since the sequence of push and pop operations to sort a permutation is easily seen to be unique, and every sequence of operations sorts \emph{some} permutation 
this is clearly a bijection. Moreover, it respects the substructure relationship since, 
when deleting an element, we just delete the pair of matched steps, 
or equivalently the arch in the corresponding arch system, 
which corresponds to push and pop operations that affect that element. 
This bijection can also be realised intrinsically. 
The $n$ arches are labelled with the integers from $1$ through $n$ according to the following rules: 
if two arches are nested, then the outer arch has a greater label than the inner one, 
and if two arches are not nested the arch to the left has a lesser label than the arch to the right. 
The permutation is then read by reading the labels of the arches in order of their leftmost endpoints. 
This means that the left to right maxima of the permutation (i.e. the elements that have no greater element to their left) correspond to outermost arches, 
and within them an arch system is constructed using the same principle recursively on the following lesser elements.
An example of these correspondences is given in Figure \ref{fi:archForestPermExample}.

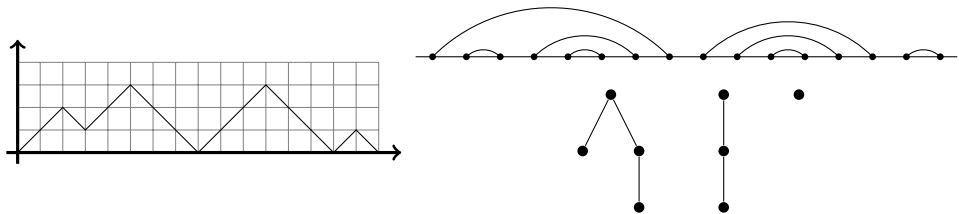
\begin{figure}

\begin{minipage}[c]{0.45\textwidth}
\centerline{
\begin{tikzpicture}[scale=0.3]
\draw[help lines] (0,0) grid (16,4);
\draw[very thick,->] (-0.5,0) -- (17,0);
\draw[very thick,->] (0,-0.5) -- (0,5);
\draw (0,0) -- (1,1) -- (2,2) -- (3,1) -- (4,2) -- (5,3) -- (6,2) -- (7,1) -- (8,0) -- (9,1) -- (10,2) -- (11,3) -- (12,2) -- (13,1) -- (14,0) -- (15,1) -- (16,0);
\end{tikzpicture}
}
\end{minipage} \begin{minipage}[c]{0.45\textwidth}
\centerline{
\begin{tikzpicture}[scale=0.45]
\draw (-0.5,0) -- (15.5,0);
\foreach \x in {0,1,...,15} 
  \fill (\x, 0) circle (0.1);
\draw (0,0)  arc[radius = 7/sqrt(2), start angle=135, end angle=45];
\draw (1,0)  arc[radius = 1/sqrt(2), start angle=135, end angle=45];
\draw (3,0)  arc[radius = 3/sqrt(2), start angle=135, end angle=45];
\draw (4,0)  arc[radius = 1/sqrt(2), start angle=135, end angle=45];
\draw (8,0)  arc[radius = 5/sqrt(2), start angle=135, end angle=45];
\draw (9,0)  arc[radius = 3/sqrt(2), start angle=135, end angle=45];
\draw (10,0)  arc[radius = 1/sqrt(2), start angle=135, end angle=45];
\draw (14,0)  arc[radius = 1/sqrt(2), start angle=135, end angle=45];
\end{tikzpicture}
}
\vspace{10pt}
\centerline{
\tikzstyle {every node} = [fill,circle,minimum size=4, inner sep = 0]
\begin{tikzpicture}[shape=circle, scale = 0.5]
\node at (0,0) {} 
  child{
    node {}
  }
  child{
     node {}
       child{
       	node {}
	}
  }
;
\node at (3,0) {}
  child{
    node {}
      child{
        node{}
      }
    };
\node at (5,0) {};
\end{tikzpicture}
}
\end{minipage} 
\caption{The Dyck path, arch system and plane forest corresponding to the 231-avoiding permutation 41327658. 
Reading $u$ steps (or arch beginnings) as push operations and $d$ steps (or arch ends) as pop operations on a stack, 
the Dyck path (or arch system) successfully sorts this permutation i.e.~the output sequence would be 12345678.}
\label{fi:archForestPermExample}
\end{figure}

\begin{remark}
Of course, there are also classical bijections between Dyck paths, plane forests or 231-avoiding permutations and plane binary trees. 
However, it is deliberate that we do not consider binary trees among the Catalan families of this work, 
since the substructure relation on Dyck paths, plane forests or 231-avoiding permutations does not translate naturally to the context of binary trees. 
This fact somehow explains why the link between 231-avoiding permutations and binary trees with respect to pattern avoidance 
is not as natural as one might hope for -- see \cite{Pudwell:Trees}*{Section 6}.
\end{remark}

In these four equivalent contexts we are interested in considering the problem:
\begin{quote}
\emph{Given a single structure $A$, what is the generating function of the collection of structures that do not have $A$ as a substructure?}
\end{quote}

Going back to some examples discussed in the introduction, 
note that Dyck paths of height at most $n$ corresponds to Dyck paths that do not have $u^nd^n$ as a subpath. 
Under the correspondences we have described, these correspond to 
arch systems that do not have $N_n = \archOver{... \archOver{\emptyArch} ... }$, the nested arch system with $n$ arches, as a subsystem, 
plane forests of depth at most $n$, 
and 231-avoiding permutations with no $n(n-1) \dots 21$ pattern. 

Structures that do not have $A$ as a substructure are said to \emph{avoid} $A$ and we will denote the set (or \emph{class}) of them by $\Av(A)$. 
If a structure does not avoid $A$ it is said to \emph{involve} or \emph{contain} $A$. 
In this paper we will only be considering the avoidance of a single structure -- 
but of course in general we could consider any collection of structures closed downwards under the substructure relation.

\begin{definition}
The classes $\Av(A)$ and $\Av(B)$ are \emph{Wilf-equivalent}, written $\Av(A) \we \Av(B)$, if there is a bijection between them that preserves the size of each structure. 
Equivalently, the generating functions $F_A$ of $\Av(A)$ and $F_B$ of $\Av(B)$ are equal.
\end{definition}

Sometimes \emph{par abus de langage} we may say that $A$ and $B$ are Wilf-equivalent when we mean that $\Av(A)$ and $\Av(B)$ are. 
If $A$ and $B$ are of different sizes, then they cannot possibly be Wilf-equivalent, 
so effectively Wilf-equivalence is an equivalence relation on structures of size $n$ for each $n$. 
As such, the $n^{\mbox{\scriptsize th}}$ Catalan number is an upper bound for the number of its equivalence classes there, but we shall see that this is far from the truth.

\section{Arch systems containing and avoiding subsystems}

If an arch system $X$ contains some arch system $P$ then there is a \emph{leftmost} occurrence of $P$ in $X$ (which we often denote $P_L$) 
by which we mean the occurrence of $P$ whose rightmost point (i.e.~the point of $X$ that corresponds to the final point of $P$ in this occurrence) is as far left as possible. 
If there are two such occurrences with the same rightmost point, we designate as $P_L$ the one whose second rightmost point is as far left as possible etc. 
There is also a corresponding notion of \emph{rightmost} occurrence.

One advantage of working with arch systems is that it is clear that, 
when searching for a substructure of $X$ equal to some given arch system we may proceed in a greedy fashion. That is:

\begin{observation}
\label{ob:greedyInvolvement}
Suppose that $P$, $Q$ and $X$ are arch systems and that $PQ$ is a substructure of $X$. Then, in witnessing this we may use the leftmost occurrence, $P_L$, of $P$ in $X$.
\end{observation}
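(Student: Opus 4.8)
The plan is to prove the following stronger statement by induction on the number of arches of $X$: if $PQ$ is a substructure of $X$, then there is an occurrence of $PQ$ in $X$ whose restriction to the ``$P$-part'' is exactly the leftmost occurrence $P_L$ of $P$ in $X$. The key structural fact to exploit is that $P_L$ is, by definition, the occurrence of $P$ that is ``pushed as far left as possible'' in the lexicographic order on the sequences of rightmost endpoints; so one expects an exchange argument: starting from an arbitrary occurrence of $PQ$, we want to slide the $P$-portion leftward to coincide with $P_L$ without disturbing the $Q$-portion.

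First I would fix an arbitrary occurrence of $PQ$ in $X$, which splits as an occurrence $P'$ of $P$ followed (strictly to its right, since concatenation is a horizontal juxtaposition) by an occurrence $Q'$ of $Q$. Let $p$ be the rightmost point used by $P_L$ and $p'$ the rightmost point used by $P'$; by minimality of $P_L$ we have $p \le p'$ (with the obvious tie-breaking convention). The main step is then to show that every point used by $Q'$ lies strictly to the right of $p$. Since $Q'$ lies entirely to the right of $P'$, its leftmost point is to the right of $p'$, hence to the right of $p$; so in fact $P_L$ and $Q'$ are disjoint and correctly ordered, and $P_L$ followed by $Q'$ witnesses $PQ$ in $X$. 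This argument actually needs no induction at all, once one observes that the defining ``leftmost'' condition forces the rightmost endpoint of $P_L$ to be at most that of $P'$: the whole of $Q'$ sits strictly right of $P'$'s rightmost point, a fortiori strictly right of $P_L$'s rightmost point.

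I would therefore carry out the proof in two short steps: (i) record the splitting of a $PQ$-occurrence as $P' \cdot Q'$ with $Q'$ entirely right of $P'$ on the baseline; (ii) invoke the definition of $P_L$ to get that $P_L$'s rightmost point is weakly left of $P'$'s rightmost point, conclude that $P_L$ and $Q'$ are disjoint with $P_L$ entirely to the left, and hence that $P_L Q'$ is the desired occurrence.

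The only delicate point — and the one I expect to spend the most care on — is making precise what ``to the left of'' means for occurrences and checking that the tie-breaking in the definition of $P_L$ does not cause trouble: one must be sure that when $P_L$'s rightmost point equals $P'$'s rightmost point, the occurrence $Q'$ still starts strictly to the right of that common point (which it does, because $Q'$ is strictly right of all of $P'$, in particular of $P'$'s rightmost point). Once that observation is isolated, the rest is immediate and no induction or greedy iteration is actually required.
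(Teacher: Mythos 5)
Your argument is correct: since an occurrence of $PQ$ in $X$ preserves the left-to-right order of endpoints, its $P$-part $P'$ lies entirely left of its $Q$-part $Q'$, and the defining minimality of $P_L$'s rightmost endpoint puts all of $P_L$ weakly left of $P'$'s rightmost point, hence strictly left of every point of $Q'$, so $P_L$ together with $Q'$ again induces $PQ$; this is precisely the greedy exchange the paper treats as immediate (it states the observation without proof). You are also right that no induction is needed and that the tie-breaking in the definition of $P_L$ plays no role, only the minimality of its rightmost point.
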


We will use this observation (and some obvious generalisations) repeatedly without further comment. 
Note however that we do not suggest that $X$ must factor into a part containing $P$ and a part containing $Q$. 
For example the system $\archOver{P \emptyArch}\emptyArch$ has $P \emptyArch \emptyArch$ as a substructure, but no such factorisation.

\medskip

For any arch system $A$, let $F_A$ denote the generating function of $\Av(A)$. 
It is a result of \cite{Mansour:Restricted} (expressed in somewhat different terms of course) that $F_A$ is necessarily a rational function. 
In fact, given a factorisation of $A$ into atoms we can write down a system of equations that allow for the recursive computation of $F_A$ 
(again, this is already done in \cite{Mansour:Restricted} and, in somewhat more general terms, in \cite{AA05}). 
The following proposition simply translates that result into the current context.

\begin{proposition}
\label{pr:classEnumeration}
Let $A$ be an arch system, with $A = a_1 a_2 \cdots a_m$ its factorisation into atoms, and $a_1 = \archOver{A_1}$. Then the generating function of $\Av(A)$ is 
\[
F_A = 1 + t F_{A_1} F_A + t (F_{a_1} - F_{A_1}) F_{a_2 \dots a_m} + t \sum_{k=2}^m (F_{a_1 \dots a_k} - F_{a_1 \dots a_{k-1}}) F_{a_k \dots a_m} .
\]
In particular, $F_A$ is rational. 
\end{proposition}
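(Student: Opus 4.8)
The plan is to analyze the structure of an arbitrary arch system $X$ that avoids $A = a_1 a_2 \cdots a_m$ by considering its leftmost atom and how an occurrence of $A$ could be ``almost'' formed, then translate the resulting case analysis into the claimed functional equation. Write $X = x_1 X'$ where $x_1 = \archOver{X_1}$ is the leftmost atom of $X$ and $X'$ is the remainder. Since $X$ avoids $A$, we want to classify how much of $A$ the prefix $x_1$ (together with appropriate parts of $X'$) is allowed to contain, using Observation~\ref{ob:greedyInvolvement} to always place the relevant prefix of $A$ as far left as possible. The term $1$ accounts for the empty arch system, and each of the remaining terms should correspond to one regime: (i) $x_1$ contains nothing relevant and $X_1$ merely avoids $A_1$, contributing $t F_{A_1} F_A$ (the $t$ for the arch creating $x_1$, $F_{A_1}$ for its contents avoiding $A_1$ since $a_1 = \archOver{A_1}$, and $F_A$ for the tail $X'$ which must still avoid $A$); (ii) $X_1$ involves $A_1$ but $x_1$ stops short of involving all of $a_1$, so $X_1$ lies in the difference class counted by $F_{a_1} - F_{A_1}$, forcing the tail to avoid $a_2 \cdots a_m$; and (iii) for each $k$ from $2$ to $m$, the prefix $x_1 \cdots$ realizes exactly $a_1 \cdots a_{k-1}$ and part of $a_k$ but not all of $a_1 \cdots a_k$, with the tail then required to avoid $a_k \cdots a_m$.

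The key step is to argue that these cases are exhaustive and mutually exclusive, and that in each case the contribution factors as a product of generating functions of the stated classes. For exclusivity and exhaustiveness I would track, for a given avoider $X$, the largest $j$ such that a prefix of $X$ (read greedily, using leftmost occurrences) involves $a_1 \cdots a_{j-1}$ but the continuation forces the behaviour above; the ``greedy'' nature from Observation~\ref{ob:greedyInvolvement} is what makes this $j$ well-defined and the decomposition canonical. The factorisation into a product of generating functions then follows because once the leftmost atom $x_1$ and its role are fixed, the constraint on the tail $X'$ is a pure avoidance constraint (of $A$, of $a_2 \cdots a_m$, or of $a_k \cdots a_m$) that is independent of the internal structure of $x_1$ beyond which difference class it lies in. Translating "involves $a_1 \cdots a_k$ as a substructure" for the prefix into "the contents $X_1$ lies in a class whose generating function is a difference $F_{a_1 \cdots a_k} - F_{a_1 \cdots a_{k-1}}$" requires noting that involving $a_1 \cdots a_k$ but not $a_1 \cdots a_{k+1}$ partitions the relevant structures exactly as those differences record; here one must be a little careful that the atom arch of $x_1$ may itself serve as the outermost arch of $a_1 = \archOver{A_1}$, which is precisely why the first difference term reads $F_{a_1} - F_{A_1}$ rather than something symmetric.

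**The main obstacle** I anticipate is verifying that the tail constraint really does decouple cleanly — in particular, ruling out a scenario where an occurrence of $A$ in $X$ straddles the boundary between $x_1$ and $X'$ in a way not captured by ``prefix realizes $a_1 \cdots a_{k-1}$ plus part of $a_k$, tail realizes $a_k \cdots a_m$.'' The warning in the text (the example $\archOver{P\emptyArch}\emptyArch$ containing $P\emptyArch\emptyArch$) shows exactly that $X$ need not factor compatibly with a factorisation of $A$, so the argument cannot simply say ``$X = X^{(1)} X^{(2)}$ with $X^{(1)} \supseteq a_1 \cdots a_k$.'' Instead one must phrase everything in terms of what the single leftmost atom $x_1$ and its contents can absorb, with the overlap at index $k$ (the atom $a_k$ being split between inside-$x_1$ and the tail) being the subtle point; this is where I would spend most of the care, likely by an explicit argument that the leftmost occurrence of $a_1 \cdots a_k$ in $X$, if it exists, either lies entirely within a prefix whose final atom is $x_1$, or else its portion inside $x_1$ is an initial segment $a_1 \cdots a_{k-1} \cdot (\text{part of } a_k)$ with the rest in $X'$. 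Once that combinatorial decomposition is pinned down, summing the generating-function contributions over the regimes gives the displayed identity, and rationality of $F_A$ follows by induction on size: each $F$ appearing on the right-hand side other than $F_A$ itself is the generating function of an avoidance class for a strictly smaller arch system (or a sum/difference of such), hence rational by the inductive hypothesis, and the equation is linear in $F_A$ with rational coefficients, so $F_A$ is rational.
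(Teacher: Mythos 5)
Your proposal is correct and takes essentially the same approach as the paper's (very brief) proof: partition the $A$-avoiders according to how much of $A$ occurs within the first atom $x_1$, with the outer arch of $x_1$ possibly serving as the outer arch of $a_1$ (hence the asymmetric term $t(F_{a_1}-F_{A_1})F_{a_2\cdots a_m}$), and rationality by induction on size since the identity is linear in $F_A$. The one obstacle you single out --- the atom $a_k$ being ``split'' between $x_1$ and the tail --- in fact cannot arise, because the image of an atom lies entirely under the image of its outermost arch; the only straddling possibility is the $a_1$/outer-arch case you already handle, so each case forces on the tail exactly the avoidance condition appearing in the corresponding term.
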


Fundamentally the first part of the proposition is proved simply by partitioning $A$-avoiding arch systems according to ``how much of $A$'' can be found within the first arch, and the conclusion of the second part follows by an easy inductive argument.

\section{A refinement of Wilf-equivalence}
\label{sec:equivalenceRelation}

In this section, we introduce an equivalence relation, $\sim$, on the collection of arch systems. 
We will then establish that this relation refines Wilf-equivalence, i.e.~that $A \sim B$ implies $\Av(A) \we \Av(B)$. So, without further ado:

\begin{definition}
\label{de:sim}
The binary relation, $\sim$, on arch systems is the finest equivalence relation that satisfies:
\begin{align}
\label{eq:archOverEquivalence}
 & A \sim B \implies \archOver{A} \sim \archOver{B} \\
\label{eq:substituteAtoms}
 & a \sim b \implies PaQ \sim PbQ \\
\label{eq:commuteAtoms}
 & PabQ \sim PbaQ \\
\label{eq:binaryRotation}
 & a \archOver{bc} \sim \archOver{ab} c .
\end{align}
where $A$, $B$, $P$ and $Q$ denote arbitrary arch systems; and $a$, $b$ and $c$ denote arbitrary atoms or empty arch systems. 
The equivalence classes of $\sim$ will be called \emph{cohorts}.
\end{definition}

Note that if $A \sim B$ then $A$ and $B$ have the same number of arches.
Note also that $A \sim B \Leftrightarrow \archOver{A} \sim \archOver{B}$, since (non trivial) equivalences between atoms may only be produced by rule (\ref{eq:archOverEquivalence}).

The main result which we prove in the following subsections is

\begin{theorem}
\label{th:mainTheorem}
If $A$ and $B$ are arch systems and $A \sim B$ then $\Av(A) \we \Av(B)$.
\end{theorem}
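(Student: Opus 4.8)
The plan is to reduce Theorem~\ref{th:mainTheorem} to the four generating rules of Definition~\ref{de:sim}. Since $\sim$ is by definition the finest equivalence relation closed under rules \eqref{eq:archOverEquivalence}--\eqref{eq:binaryRotation}, and since Wilf-equivalence $\we$ (equality of generating functions $F_A = F_B$) is itself an equivalence relation, it suffices to show that the relation ``$F_A = F_B$'' is closed under each of the four rules separately. Concretely, I would prove four lemmas: (i) if $F_A = F_B$ then $F_{\archOver{A}} = F_{\archOver{B}}$; (ii) if $F_a = F_b$ then $F_{PaQ} = F_{PbQ}$ for all arch systems $P,Q$ (where $a,b$ are atoms or empty); (iii) $F_{PabQ} = F_{PbaQ}$; and (iv) $F_{a\archOver{bc}} = F_{\archOver{ab}c}$. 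Establishing these four closure properties immediately gives that $A \sim B$ implies $F_A = F_B$, hence $\Av(A) \we \Av(B)$.

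The main engine for all four lemmas is Proposition~\ref{pr:classEnumeration}, which expresses $F_A$ recursively in terms of the $F$'s of the prefixes $a_1\cdots a_k$ of the atom factorisation of $A$ and of the suffixes $a_k\cdots a_m$, together with $F_{A_1}$ where $a_1 = \archOver{A_1}$. The strategy for each rule is to set up the system of equations for both sides and check they are isomorphic under the hypothesised equalities. For rule (i), the atom $\archOver{A}$ has a single atom in its factorisation, so Proposition~\ref{pr:classEnumeration} degenerates to $F_{\archOver{A}} = 1 + tF_A F_{\archOver{A}}$, i.e.\ $F_{\archOver{A}} = 1/(1 - tF_A)$; this depends on $A$ only through $F_A$, so (i) is immediate. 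For rules (ii)--(iv) I would note that the recursion in Proposition~\ref{pr:classEnumeration} is ``local'': $F_{PaQ}$ is built from the $F$-values of all contiguous sub-factorisations, so I need to understand how replacing $a$ by an equivalent atom $b$, or swapping two adjacent atoms, or performing the rotation $a\archOver{bc} \leftrightarrow \archOver{ab}c$, affects each of those sub-factorisation $F$-values. The cleanest approach is probably an induction on the size (number of arches) of the larger structure: rules (ii) and (iii) change only a bounded window of the factorisation, and one checks that every sub-factorisation $F$-value appearing in the recursion is either literally unchanged or is itself related by an instance of the same rule on a smaller structure, so the inductive hypothesis applies; then the two systems of equations coincide and have the same solution.

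I expect the genuinely delicate rule to be \eqref{eq:binaryRotation}, the ``binary rotation'' $a\archOver{bc} \sim \archOver{ab}c$. Here the atom factorisation structure changes shape rather than merely permuting or substituting atoms: on the left, $\archOver{bc}$ is a single atom whose contents are the two-atom system $bc$, while on the right $\archOver{ab}$ is a single atom with contents $ab$ and $c$ is a separate trailing atom. So the prefixes, suffixes, and the ``$A_1$'' term of Proposition~\ref{pr:classEnumeration} all get reorganised, and one must verify a nontrivial algebraic identity among the resulting rational functions --- presumably by writing out the defining equations for $F_{a\archOver{bc}}$ and $F_{\archOver{ab}c}$ explicitly, substituting the formula $F_{\archOver{X}} = 1/(1-tF_X)$ for the $\archOver{\cdot}$ pieces, and checking the two coincide (possibly needing the inductive hypothesis to identify cross-terms like $F_{ab}$ appearing on both sides). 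A possible alternative, if the pure generating-function manipulation gets unwieldy, is to exhibit an explicit size-preserving bijection between $\Av(a\archOver{bc})$ and $\Av(\archOver{ab}c)$ --- the rotation is reminiscent of the classical rotation on binary trees, so one would look for a combinatorial map realised by a local ``rebracketing'' operation on arch systems that sends $a\archOver{bc}$-avoiders to $\archOver{ab}c$-avoiders; but I would first try the equational route since Proposition~\ref{pr:classEnumeration} is tailor-made for it. The other potential subtlety is the degenerate cases where some of $a,b,c$ (or $P,Q$) are the empty arch system, which must be checked but should follow from the conventions $F_{\emptyArch} = F_\varnothing$ and the base of the recursion.
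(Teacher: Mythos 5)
Your overall reduction is the same as the paper's: since $\sim$ is the finest equivalence relation closed under rules (\ref{eq:archOverEquivalence})--(\ref{eq:binaryRotation}), it suffices to check that Wilf-equivalence is closed under each rule, and the paper likewise proceeds case by case. Your case (i) (via $F_{\archOver{A}}=1/(1-tF_A)$) is fine and is exactly the paper's Observation~\ref{obs:GFforArchOver}; your case (iv) plan is essentially the paper's analytic proof (the paper solves the system from Proposition~\ref{pr:classEnumeration}, finds $F_{a\archOverSubscript{bc}}$ symmetric in $F_a,F_b,F_c$, and then invokes rule (\ref{eq:commuteAtoms}) to pass from $c\archOver{ab}$ to $\archOver{ab}c$). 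Your equational induction also goes through for rule (ii): every sub-factorisation of $PaQ$ appearing in the recursion either does not contain $a$, or is a strictly smaller instance $P'aQ'$ of the same rule, or is $PaQ$ itself, and the case of empty $P$ is handled by $F_a=F_b\Rightarrow F_{A_1}=F_{B_1}$.

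The genuine gap is rule (iii), $F_{PabQ}=F_{PbaQ}$, where your claim that ``every sub-factorisation $F$-value \ldots is either literally unchanged or is itself related by an instance of the same rule on a smaller structure, so \ldots the two systems of equations coincide'' is false. Expanding $F_{PabQ}$ by Proposition~\ref{pr:classEnumeration} produces the terms $(F_{Pa}-F_P)F_{abQ}$ and $(F_{Pab}-F_{Pa})F_{bQ}$, while the expansion of $F_{PbaQ}$ produces $(F_{Pb}-F_P)F_{baQ}$ and $(F_{Pba}-F_{Pb})F_{aQ}$: the mixed quantities $F_{Pa}$ versus $F_{Pb}$ and $F_{bQ}$ versus $F_{aQ}$ are not equal in general and are not related by any smaller instance of the rules (with $P$ empty the mismatch already appears in the $F_{A_1}$ term, since the first atoms $a$ and $b$ have inequivalent contents). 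After applying the inductive hypothesis to the terms it does cover, one is left needing the identity $(F_{Pa}-F_{Pb})F_{abQ}+ (F_{Pab}-F_{Pa})F_{bQ}-(F_{Pab}-F_{Pb})F_{aQ}=0$, which is of essentially the same difficulty as the statement being proved, so the induction does not close. This is precisely the point where the paper abandons the equational route and gives a bijection: in an arch system containing $PaQ$ it fixes the leftmost $P$, the rightmost $Q$, and the leftmost intervening atom containing $a$, and reverses the order of the subintervals cut out between them (the $a$-avoiding ones moving to the right of the $b$-avoiding ones), yielding a size-preserving bijection $\Av(PabQ)\to\Av(PbaQ)$. You would need either this (or some other) bijection, or a substantially more elaborate algebraic argument, to secure rule (iii); note also that your case (iv), like the paper's, relies on rule (iii) being available.
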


Interestingly, another equivalence relation (say, $\equiv$) on Catalan structures has been defined in a similar fashion by Rudolph~\cite{Rudolph:Pattern}. 
She proves in this paper that two $\equiv$-equivalent 132-avoiding permutations $\pi$ and $\tau$ are \emph{equipopular}, 
that is: for any $n$, the total number of occurrences of $\pi$ and $\tau$ in 132-avoiding permutations of size $n$ are equal. 
In other words, $\equiv$ refines equipopularity, and the analogy with $\sim$ refining Wilf-equivalence is clear. 
What it further interesting in the case of $\equiv$, is that it \emph{coincides} with equipopularity, as shown in~\cite{Chua:Equipopularity}. 
As a consequence, the number of equivalence classes for equipopularity among permutations of size $n$ is given by the number of partitions of $n$.

We separate the proof of Theorem~\ref{th:mainTheorem} into bijective and analytic proofs -- including some bijective proofs for cases where analytic ones are available. 
One reason for this is that the bijective proofs can frequently be refined to allow for term by term comparisons between the generating functions for inequivalent cohorts, 
while this is not so easily accomplished when only analytic proofs are available. 
A second reason is that these bijective proofs are needed for proving our claim of the introduction: 
that we are able (at least in principle) to provide bijections between any two classes of permutations $\Av(231,\pi)$ 
and $\Av(231,\tau)$ for $\pi$ and $\tau$ of size $n$ whose generating function is $C_n$.

To prove Theorem \ref{th:mainTheorem} it is sufficient to show that its conclusion holds for each of the four cases arising in Definition \ref{de:sim}. The proof is therefore subdivided into such cases. 
For compactness of notation we have found it convenient to denote functional application in exponential form, 
i.e.~the image of an arch system $X$ under a map $\tau$ will be denoted $X^\tau$.

\subsection{Bijective proofs}
\label{subsec:bijectiveProofs}

\begin{proof}[Proof of case (\ref{eq:archOverEquivalence})]
Let $A$ and $B$ be given with $A \sim B$, and suppose that $\Av(A) \we \Av(B)$. 
We may further assume that $A$ and $B$ are not empty, or the result trivially holds. 
Take $\sigma$ to be any size-preserving bijection between $\Av(A)$ and $\Av(B)$. 
Define a map $\tau$ on atoms $x = \archOver{X}$ belonging to $\Av(\archOver{A})$ by $x^\tau = \archOver{X^\sigma}$. 
This is possible since $x \in \Av(\archOver{A})$ if and only if $X \in \Av(A)$. 
Now extend $\tau$ to concatenations of atoms in the obvious way, i.e.~$(x_1 x_2 \dots x_m)^\tau = x_1^\tau x_2^\tau \dots x_m^\tau$. 
Since $\Av(\archOver{A})$ consists exactly of arch systems which are concatenations of atoms whose contents belong to $\Av(A)$ 
(and correspondingly $\Av(\archOver{B})$ consists exactly of arch systems which are concatenations of atoms whose contents belong to $\Av(B)$), 
$\tau : \Av(\archOver{A}) \to \Av(\archOver{B})$ is a size preserving bijection.
\end{proof}

\begin{proof}[Proof of case (\ref{eq:substituteAtoms})]
Let arbitrary arch systems $P$ and $Q$ and atoms $a$ and $b$ be given with $a \sim b$. 
Assume that $a$ and $b$ are not empty (or the result trivially holds),  
and let $\sigma: \Av(a) \to \Av(b)$ be a size preserving bijection. We will define a size preserving bijection $\tau: \Av(PaQ) \to \Av(PbQ)$.

Suppose that $X \in \Av(PaQ)$. If $X \in \Av(PQ)$ we define $X^\tau = X$. 
Otherwise take the leftmost copy, $P_L$, of $P$ in $X$ and the rightmost copy, $Q_R$, of $Q$. 
The arches that begin before the end of $P_L$ but end after it, and those that end after the beginning of $Q_R$ but begin before it 
divide the segment between the end of $P_L$ and the beginning of $Q_R$ into intervals. 
This is illustrated in Figure \ref{fi:substituteAtoms}. Since $a$ is an atom, any occurrence of $a$ between the end of $P_L$ and the beginning of $Q_R$ would have to be entirely contained in one of the intervals.
So, each of these intervals contains an arch system that avoids $a$ and conversely, if we are given an arch system with this property, it avoids $PaQ$. 
So define $X^\tau$ by applying $\sigma$ to each of the intervals while retaining the structure of $X$ up to the end of $P_L$ and from the beginning of $Q_R$ (including the arches that define the intervals). 
It is immediate to check that this defines a bijection from $\Av(PaQ)$ to $\Av(PbQ)$. 
\end{proof}

\begin{figure}
\centerline{%
\begin{tikzpicture}[scale=0.5]
\draw (0,0) -- (20,0);
\draw[dashed] (5,-1) -- (5,5);
\draw[dashed] (15,-1) -- (15,5);
\draw (4.8,0)  arc (0:45:10 and 5);
\draw (4,1) node {$P_L$};
\draw[dashed] (8,0) arc (0:90:6 and 4);
\draw[dashed] (10,0) arc (0:90:8 and 5);
\draw (15.2,0) arc (180:135:10 and 5);
\draw (16,1) node {$Q_R$};
\draw[dashed] (12,0) arc (180:90:6 and 5);
\draw (6.5,-1) node {$I_1$};
\draw (9,-1) node {$I_2$};
\draw (11,-1) node {$I_3$};
\draw (13.5,-1) node {$I_4$};
\end{tikzpicture}
}
\caption{The situation arising in the proof of case (\ref{eq:substituteAtoms}). 
In an arch system $X$ involving $PQ$ but avoiding $PaQ$ the leftmost copy of $P$, denoted $P_L$, and the rightmost copy of $Q$, denoted $Q_R$ are designated. 
Arches with one endpoint inside and one endpoint outside the interval between $P_L$ and $Q_R$ create a sequence of subintervals ($I_1$ through $I_4$ here) that must avoid $a$. 
To produce a $PbQ$ avoiding arch system, a bijection mapping $a$-avoiding systems to $b$-avoiding systems is applied to the $I_i$ and the remainder of the system is left unchanged.}
\label{fi:substituteAtoms}
\end{figure}
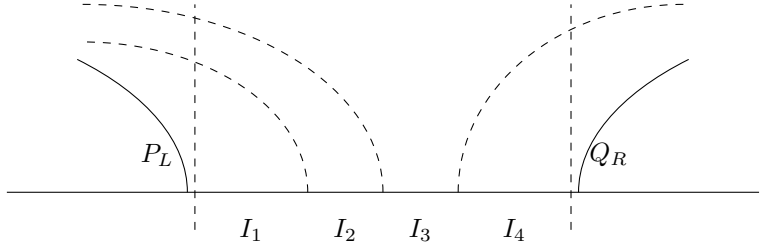

\begin{proof}[Proof of case (\ref{eq:commuteAtoms})]
The claim is trivial when $a$ or $b$ is empty. For the non-trivial case let $a$ and $b$ be non empty arbitrary atoms and $P$ and $Q$ arbitrary arch systems. We wish to construct a bijection $\tau: \Av(P a b Q) \to \Av(P b a Q)$. It will be helpful in what follows for the reader to refer to Figure \ref{fi:commuteAtoms}.
As in the previous case consider an arch system $X$. If $X$ avoids $PaQ$ then define $X^\tau = X$. 
Otherwise take $P_L$ to be the leftmost $P$, $a_L$ the leftmost atom involving $a$ following $P_L$ and $Q_R$ the rightmost $Q$ in $X$. 
Furthermore, denote by $C$ the contents of $a_L$. 
As in the previous proof the interval between $P_L$ and $Q_R$ is subdivided by those arches that have only one endpoint in this interval, 
say there are $i$ (resp. $j$) such arches with only their right (resp. left) endpoint between $P_L$ and $Q_R$. 
But now also one of those intervals (the one containing $a_L$) is further subdivided before and after $a_L$ by $a_L$ itself and any arches nested over $a_L$. 
Denote by $k$ the number of such arches (including the outermost arch of $a_L$). 
All the designated subintervals to the left of $a_L$ must avoid $a$ (since $a_L$ was leftmost) while those to the right of it must avoid $b$ (since $X$ avoids $PabQ$). 
To define $X^\tau$ simply reverse the order of these subintervals (keeping the arch systems within them fixed i.e.
~the contents of a subinterval are not changed, only its position between $P_L$ and $Q_R$). 
The structure of the arch system outside these intervals is unchanged, that is: 
the arch system before $P_L$ and after $Q_R$ is not modified, and there are still $k$ arches on top of $C$, 
and $i$ (resp. $j$) arches with only their right (resp. left) endpoint between $P_L$ and $Q_R$. 
In the resulting arch system $X^\tau$, $P_L$ and $Q_R$ are still the leftmost copies of $P$ and the rightmost copies of $Q$ respectively 
(since nothing before the end of $P_L$ or after the start of $Q_R$ has been changed). 
Between these, the atom $a_L$ has become the rightmost atom involving $a$. 
Since all of the intervals before it but following $P_L$ avoid $b$, $X^\tau$ avoids $PbaQ$. 
Moreover, it is clear that we can reverse this construction, so $\tau: \Av(PabQ) \to \Av(PbaQ)$ is a size preserving bijection as claimed.
\end{proof}

Remark that in the proof of case (\ref{eq:commuteAtoms}), we have chosen to reverse $A_1 \dots A_{i+k} C B_1 \dots B_{j+k}$ to $B_{j+k} \dots B_1 C A_{i+k} \dots A_1$ in $X^{\tau}$. 
But many variants of $\tau$ could have been defined by choosing any other permutation of the $A_{\ell}$, $B_m$ and $C$ 
that respects that all the $B_m$ are to the left of $C$ and all the $A_{\ell}$ to its right. 

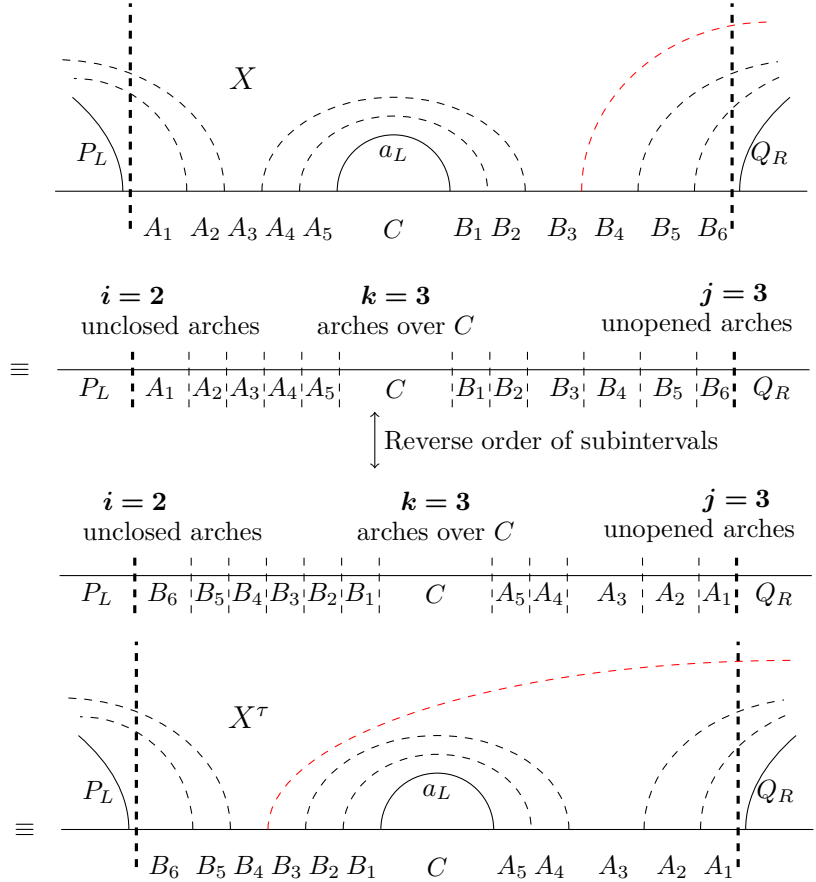
\begin{figure}
\centerline{%
\begin{tikzpicture}[scale=0.5]
\draw (-1.4,0) node {~};
\draw (0,0) -- (20,0);
\draw[dashed, very thick] (2,-1) -- (2,5);
\draw[dashed, very thick] (18,-1) -- (18,5);
\draw (1.8,0)  arc (0:30:10 and 5);
\draw (1,1) node {$P_L$};
\draw[dashed] (3.5,0) arc (0:90:3 and 3);
\draw[dashed] (4.5,0) arc (0:90:4.5 and 3.5);
\draw (18.2,0) arc (180:150:10 and 5);
\draw (5.5, 0) [dashed] arc(180:0:3.5 and 2.5);
\draw (6.5, 0) [dashed] arc(180:0:2.5 and 2);
\draw (7.5, 0) arc (180:0:1.5 and 1.5);
\draw (9., 1) node {$a_L$};
\draw (19,1) node {$Q_R$};
\draw[red, dashed] (14,0) arc (180:90:5 and 4.5);
\draw[dashed] (15.5,0) arc (180:100:4.5 and 3.5);
\draw[dashed] (17,0) arc (180:120:4.5 and 3.5);
\draw (2.75,-1) node {$A_1$};
\draw (4,-1) node {$A_2$};
\draw (5,-1) node {$A_3$};
\draw (6,-1) node {$A_4$};
\draw (7,-1) node {$A_5$};
\draw (9,-1) node {$C$};
\draw (11,-1) node {$B_1$};
\draw (12,-1) node {$B_2$};
\draw (13.5,-1) node {$B_3$};
\draw (14.75,-1) node {$B_4$};
\draw (16.25,-1) node {$B_5$};
\draw (17.5,-1) node {$B_6$};
\draw (5,3) node {\large $X$};
\end{tikzpicture}
}
\vspace{2ex}
\centerline{
\begin{tikzpicture}[scale=0.5]
\draw (-1,0) node {$\equiv$};
\draw (0,0) -- (20,0);
\draw[dashed, very thick] (2,-1) -- (2,.5);
\draw[dashed, very thick] (18,-1) -- (18,.5);
\draw (2,2) node {$\bm{i=2}$};
\draw (18,2) node {$\bm{j=3}$};
\draw (9,2) node {$\bm{k=3}$};
\draw (3,1.2) node {unclosed arches};
\draw (17,1.2) node {unopened arches};
\draw (9,1.2) node {arches over $C$};
\draw (1,-0.5) node {$P_L$};
\draw[dashed] (3.5,-1) -- (3.5,.5);
\draw[dashed] (4.5,-1) -- (4.5,.5);
\draw[dashed] (5.5, -1) -- (5.5, .5);
\draw[dashed] (6.5, -1) -- (6.5, .5);
\draw[dashed] (7.5, -1) -- (7.5, .5);
\draw[dashed] (10.5,-1) -- (10.5,.5);
\draw[dashed] (11.5, -1) -- (11.5, .5);
\draw[dashed] (12.5, -1) -- (12.5, .5);
\draw (19,-0.5) node {$Q_R$};
\draw[dashed] (14,-1) -- (14,.5);
\draw[dashed] (15.5,-1) -- (15.5,.5);
\draw[dashed] (17,-1) -- (17,.5);
\draw (2.75,-0.5) node {$A_1$};
\draw (4,-0.5) node {$A_2$};
\draw (5,-0.5) node {$A_3$};
\draw (6,-0.5) node {$A_4$};
\draw (7,-0.5) node {$A_5$};
\draw (9,-0.5) node {$C$};
\draw (11,-0.5) node {$B_1$};
\draw (12,-0.5) node {$B_2$};
\draw (13.5,-0.5) node {$B_3$};
\draw (14.75,-0.5) node {$B_4$};
\draw (16.25,-0.5) node {$B_5$};
\draw (17.5,-0.5) node {$B_6$};
\end{tikzpicture}
}
\centerline{
\begin{tikzpicture}[scale=0.5]
\draw (-7.5,0) node {~};
\draw[<->] (0,0) -- (0,1.5); 
\draw[anchor = west] (0,0.75) node {Reverse order of subintervals};
\end{tikzpicture}
}
\centerline{
\begin{tikzpicture}[scale=0.5]
\draw (-1.4,0) node {~};
\draw (0,0) -- (20,0);
\draw[dashed, very thick] (2,-1) -- (2,.5);
\draw[dashed, very thick] (18,-1) -- (18,.5);
\draw (2,2) node {$\bm{i=2}$};
\draw (18,2) node {$\bm{j=3}$};
\draw (10,2) node {$\bm{k=3}$};
\draw (3,1.2) node {unclosed arches};
\draw (17,1.2) node {unopened arches};
\draw (10,1.2) node {arches over $C$};
\draw (1,-0.5) node {$P_L$};
\draw[dashed] (3.5,-1) -- (3.5,.5);
\draw[dashed] (4.5,-1) -- (4.5,.5);
\draw[dashed] (5.5, -1) -- (5.5, .5);
\draw[dashed] (6.5, -1) -- (6.5, .5);
\draw[dashed] (7.5, -1) -- (7.5, .5);
\draw[dashed] (8.5, -1) -- (8.5, .5);
\draw[dashed] (11.5, -1) -- (11.5, .5);
\draw[dashed] (12.5, -1) -- (12.5, .5);
\draw[dashed] (13.5, -1) -- (13.5, .5);
\draw (19,-0.5) node {$Q_R$};
\draw[dashed] (15.5,-1) -- (15.5,.5);
\draw[dashed] (17,-1) -- (17,.5);
\draw (2.75,-0.5) node {$B_6$};
\draw (4,-0.5) node {$B_5$};
\draw (5,-0.5) node {$B_4$};
\draw (6,-0.5) node {$B_3$};
\draw (7,-0.5) node {$B_2$};
\draw (8,-0.5) node {$B_1$};
\draw (10,-0.5) node {$C$};
\draw (12,-0.5) node {$A_5$};
\draw (13,-0.5) node {$A_4$};
\draw (14.7,-0.5) node {$A_3$};
\draw (16.25,-0.5) node {$A_2$};
\draw (17.5,-0.5) node {$A_1$};
\end{tikzpicture}
}
\vspace{2ex}
\centerline{%
\begin{tikzpicture}[scale=0.5]
\draw (-1.2,0) node {\ \ $\equiv$};
\draw (0,0) -- (20,0);
\draw[dashed, very thick] (2,-1) -- (2,5);
\draw[dashed, very thick] (18,-1) -- (18,5);
\draw (1.8,0)  arc (0:30:10 and 5);
\draw (1,1) node {$P_L$};
\draw[dashed] (3.5,0) arc (0:90:3 and 3);
\draw[dashed] (4.5,0) arc (0:90:4.5 and 3.5);
\draw (18.2,0) arc (180:150:10 and 5);
\draw (6.5, 0) [dashed] arc(180:0:3.5 and 2.5);
\draw (7.5, 0) [dashed] arc(180:0:2.5 and 2);
\draw (8.5, 0) arc (180:0:1.5 and 1.5);
\draw (10., 1) node {$a_L$};
\draw (19,1) node {$Q_R$};
\draw[red, dashed] (5.5,0) arc (180:90:14 and 4.5);
\draw[dashed] (15.5,0) arc (180:100:4.5 and 3.5);
\draw[dashed] (17,0) arc (180:120:4.5 and 3.5);
\draw (2.75,-1) node {$B_6$};
\draw (4,-1) node {$B_5$};
\draw (5,-1) node {$B_4$};
\draw (6,-1) node {$B_3$};
\draw (7,-1) node {$B_2$};
\draw (8,-1) node {$B_1$};
\draw (10,-1) node {$C$};
\draw (12,-1) node {$A_5$};
\draw (13,-1) node {$A_4$};
\draw (14.7,-1) node {$A_3$};
\draw (16.25,-1) node {$A_2$};
\draw (17.5,-1) node {$A_1$};
\draw (5,3) node {\large $X^\tau$};
\end{tikzpicture}
}
\caption{The situation arising in the proof of case (\ref{eq:commuteAtoms}). 
In the top diagram the original $PabQ$ avoiding arch system is shown. 
Each interval $A_i$ must avoid $a$ and each interval $B_j$ must avoid $b$. 
In the bottom diagram its image is shown -- 
the atom $a_L$ and the nest of arches around it are moved to the right to allow copies of the $B_j$ to be placed on the left, 
and copies of $A_i$ on the right, as seen in the middle two diagrams.}
\label{fi:commuteAtoms}
\end{figure}

Turning now to case (\ref{eq:binaryRotation}), we will give an analytic proof below, but here give a bijective proof of a special case of it (which we will make use of later). 
Namely, we prove that $\Av( a \archOver{b} ) \we \Av(\archOver{ba})$, 
which with cases (\ref{eq:archOverEquivalence}) and (\ref{eq:commuteAtoms}), is equivalent to case (\ref{eq:binaryRotation}) with (at least) one of $a$, $b$ and $c$ empty. 

\begin{proof}[Bijective proof of specialisation of case (\ref{eq:binaryRotation}): $\Av( a \archOver{b} ) \we \Av(\archOver{ba})$]
We may assume that $a$ is not empty (otherwise there is nothing to prove). 
We will also assume that $b$ is not empty, but will indicate along the proof how it can be modified in case $b$ is empty. 
The proof goes along familiar lines, so we will be somewhat brief. 
Let $X \in \Av(a \archOver{b} )$ be given. We wish to define its image $X^\tau$, and will assume that $Y^\tau$ has already been defined for all $Y$ of smaller size. 
If $X \in \Av(b)$ let $X^\tau = X$. Otherwise consider the rightmost occurrence, $b_R$, of $b$. 
Since $b$ is an atom, this occurrence ends with the final arch of something of the form $\archOver{C}$ where the contents of $b$ occur in $C$, but $b$ does not. 
Consider the intervals defined by the nest of arches (if any) over $\archOver{C}$. 
Immediately to the left of $\archOver{C}$, we have an interval $M$ and the only condition is that it must avoid $a \archOver{b}$. 
Once we move past the first enclosing arch to the left the remaining intervals (of which there are, say $p$ called $A_1$ through $A_p$) must avoid $a$. 
To the right of $\archOver{C}$ all the intervals (of which there are $p+1$, $B_0$ through $B_p$) must avoid $b$. So
\[
X = A_p \archOver{A_{p-1} \archOver{\ldots \archOver{A_1 \archOver{M \archOver{C} B_0} B_1} \ldots} B_{p-1}} B_p.
\]
Now set:
\[
X^{\tau} = B_0 \archOver{B_p \archOver{B_{p-1} \archOver{\ldots \archOver{B_{1} \archOver{C} A_1} \ldots} A_{p-1}} A_p} M^\tau.
\]
In the case where $b$ is empty, we should instead decompose $X$ according to its last arch as $X = A_1 \archOver{M}$, where $A_1$ avoids $a$ and $M$ avoids $a \emptyArch$, 
and set $X^{\tau} = \archOver{A_1} M^\tau$.

That $X^{\tau}$ avoids $\archOver{ba}$ follows by induction inside $M^\tau$ and because the $A_i$ (resp. $B_i$) all avoid $a$ (resp. $b$).

Finally, the decomposition of arch systems avoiding $\archOver{ba}$ according to their leftmost occurrence of $b$ (resp. their first arch is $b$ is empty) 
allows to describe them canonically as 
\[
B_0 \archOver{B_p \archOver{B_{p-1} \archOver{\ldots \archOver{B_{1} \archOver{C} A_1} \ldots} A_{p-1}} A_p} M' \text{\qquad (resp. } \archOver{A_1} M' \text{),}
\]
where each $B_i$ avoids $b$, $C$ avoids $b$ but involves the contents of $b$, each $A_i$ avoids $a$, and $M'$ avoids $\archOver{ba}$. 
So the above construction can be reverse, and $\tau: \Av(a\archOver{b}) \to \Av(\archOver{ba})$ is a size preserving bijection as claimed.
\end{proof}

Note that, as in the proof of case (\ref{eq:commuteAtoms}), we can again define many variants of the bijection $\tau: \Av(a\archOver{b}) \to \Av(\archOver{ba})$, 
by replacing in $X^\tau$ the sequence $B_0 B_p B_{p-1} \ldots B_1$ (resp. $A_1 \ldots A_{p-1} A_p$) by any permutation of the $B_i$ (resp. $A_i$).

\subsection{Analytic proofs}

To complete the proof of Theorem \ref{th:mainTheorem} we need to consider the full version of case (\ref{eq:binaryRotation}) 
i.e.~we must show that $\Av(a \archOver{bc}) \we \Av(\archOver{ab} c)$ when none of $a$, $b$ and $c$ is empty.

\begin{proof}[Proof of case (\ref{eq:binaryRotation})]
Let $a = \archOver{A}$, $b = \archOver{B}$ and $c = \archOver{C}$. For an arch system $X$ let $F_X$ be the generating function of $\Av(X)$. 
Using the general technique described in Proposition \ref{pr:classEnumeration} we can compute the generating function $F_{a\archOverSubscript{bc}}$ in terms of $F_A$, $F_B$ and $F_C$. 

\begin{align*}
F_{a\archOverSubscript{bc}} &= 1+ tF_{A}F_{a\archOverSubscript{bc}} + t(F_{a\archOverSubscript{bc}} - F_A) F_{\archOverSubscript{bc}} \\
F_{\archOverSubscript{bc}} &= 1 + t F_{bc} F_{\archOverSubscript{bc}} \\
F_{bc} &= 1+ tF_{B}F_{bc} + t(F_{bc} - F_B) F_{c}\\
F_c &= 1 + t F_C F_c
\end{align*}

Solving the system\footnote{Or rather, having \emph{Mathematica} solve it for you.} for $F_{a\archOverSubscript{bc}}$ in terms of $F_A$, $F_B$ and $F_C$ gives a terrible mess 
which is nevertheless symmetric in $F_A$, $F_B$ and $F_C$. 
In fact the solution is tidier if written in terms of $F_a$, $F_b$ and $F_c$ (recall that $F_a = 1/(1 - tF_A)$, i.e.~$F_A = (F_a - 1)/(t F_a)$ etc.): 
\[
F_{a\archOverSubscript{bc}} = \frac{1 - t (F_a F_b + F_b F_c + F_c F_a - F_a F_b F_c)}{1 - t (F_a + F_b + F_c - F_a F_b F_c) }
\]

Accordingly, $F_{a\archOverSubscript{bc}}$ is symmetric in $F_a$, $F_b$ and $F_c$. 
This proves that $\Av(a\archOver{bc}) \we \Av(c \archOver{ab})$. 
Now use case (\ref{eq:commuteAtoms}) to reach the desired conclusion.
\end{proof}

We have seen in the above proof that, for any atom $a=\archOver{A}$, $F_a$ completely determines $F_A$ and conversely, 
via the relations $F_a = 1/(1 - tF_A)$ and $F_A = (F_a - 1)/(t F_a)$. 
This simple fact also provides an analytic proof that:
\begin{observation}
For any atoms $a=\archOver{A}$ and $b=\archOver{B}$, $\Av(A) \we \Av(B)$ if and only if $\Av(a) \we \Av(b)$. 
\label{obs:GFforArchOver}
\end{observation}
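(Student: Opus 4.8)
The plan is to leverage the single identity $F_{\archOver{A}} = 1/(1 - t F_A)$ relating the generating function of $\Av(\archOver{A})$ to that of $\Av(A)$ for an arbitrary arch system $A$. This identity is exactly the relation ``$F_c = 1 + t F_C F_c$'' already exploited in the proof of case~(\ref{eq:binaryRotation}), and it is an immediate consequence of the description (used in the proof of case~(\ref{eq:archOverEquivalence})) of $\Av(\archOver{A})$ as the set of concatenations of atoms whose contents lie in $\Av(A)$: such atoms are enumerated by $t F_A$, so an arbitrary concatenation of them is enumerated by $\sum_{m \ge 0} (t F_A)^m = 1/(1 - t F_A)$.

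First I would record that $F_{\archOver{A}}$ lies in $1 + t\,\mathbb{Z}[[t]]$. Indeed, the empty arch system is the only arch system of size $0$ and it avoids every nonempty atom, so $F_{\archOver{A}}$ has constant term $1$ and in particular is a unit in $\mathbb{Z}[[t]]$. Rewriting the identity as $F_{\archOver{A}} - 1 = t F_A \, F_{\archOver{A}}$ shows that $F_{\archOver{A}} - 1$ is divisible by $t\,F_{\archOver{A}}$ in $\mathbb{Z}[[t]]$, so that $F_A = (F_{\archOver{A}} - 1)/(t\,F_{\archOver{A}})$. Hence $F_A$ is a function of $F_{\archOver{A}}$ and $F_{\archOver{A}}$ is a function of $F_A$; in other words each of the two generating functions determines the other.

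Applying this to the atoms $a = \archOver{A}$ and $b = \archOver{B}$ yields $F_A = F_B$ if and only if $F_a = F_b$. Since, by definition, $\Av(A) \we \Av(B)$ is equivalent to $F_A = F_B$ and $\Av(a) \we \Av(b)$ is equivalent to $F_a = F_b$, the two conditions in the statement are each equivalent to the same assertion, which proves the observation.

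The only delicate point — and the closest thing to an obstacle in what is otherwise a one-line argument — is to check that the inversion $F_A = (F_{\archOver{A}} - 1)/(t\,F_{\archOver{A}})$ is legitimate \emph{inside the ring of formal power series} rather than being a mere manipulation of rational functions. This is settled by the two elementary facts used above: $F_{\archOver{A}}$ has constant term $1$ (hence is invertible), and $F_{\archOver{A}} - 1$ is a multiple of $t$ (hence the division by $t$ causes no denominators). Both are immediate, so the proof is indeed as short as advertised.
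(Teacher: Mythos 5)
Your proof is correct and follows essentially the same route as the paper: the paper also deduces the observation from the relations $F_a = 1/(1 - tF_A)$ and $F_A = (F_a - 1)/(t F_a)$, noting that each generating function determines the other. Your extra check that the inversion is valid in $\mathbb{Z}[[t]]$ is a harmless elaboration of what the paper leaves implicit.
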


\section{The combinatorial class of cohorts}
\label{sec:cohorts}

From Theorem~\ref{th:mainTheorem} it follows that the number of different generating functions of classes of arch systems avoiding an arch system with $n$ arches
(or equivalently, the number of Wilf-equivalence classes of permutation classes $\Av(231,\pi)$ for $\pi$ of size $n$ avoiding 231) 
is at most the number of cohorts (i.e.~equivalence classes of $\sim$) for $n$ element structures. 
In Conjecture~\ref{co:cohortDeterminesGF} we suggest that these numbers may actually be equal, explaining our interest in the enumeration of cohorts. In any case, the number of cohorts certainly provides an upper bound for the number of such Wilf-equivalence classes.
Towards the goal of enumerating cohorts, we first associate with each cohort a \emph{single} structure, and then enumerate such structures. 
These structures that represent cohorts may be seen as choosing one representative in the set of all structures (e.g.~all arch systems) that form a cohort. 
Alternatively -- and it is rather this point of view we choose -- 
we can think of the structure representing a cohort as an abstract structure from which all structures in the cohort may be recovered.

\subsection{The structure of a cohort}
\label{subsec:structureCohort}

It is easiest to describe the single (abstract) structure associated with a cohort in the context of plane forests. 
Note first that these structures representing cohorts should be non-plane objects. Indeed:

\begin{proposition}
If two plane forests $A$ and $B$ are isomorphic as non-plane forests, then $A \sim B$.
\end{proposition}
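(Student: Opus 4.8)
The plan is to transport everything to the domain of arch systems via the bijection of Section~\ref{sec:CatalanStructures} and to induct on the number of arches. In these terms the statement reads: if $A$ and $B$ are arch systems whose associated plane forests are isomorphic as non-plane forests, then $A \sim B$. The empty case is trivial by reflexivity, so assume $A$ (hence $B$) is nonempty.

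First I would unwind what the non-plane isomorphism says about the atom decompositions. Write $A = a_1 a_2 \cdots a_m$ for the unique factorisation of $A$ into atoms, with $a_i = \archOver{A_i}$, and likewise $B = b_1 b_2 \cdots b_{m'}$ with $b_j = \archOver{B_j}$. A non-plane forest isomorphism between the forests of $A$ and $B$ matches roots to roots and restricts to non-plane isomorphisms between the corresponding subforests; translated back to arch systems, this is exactly the assertion that $m = m'$ and that there is a permutation $\pi \in S_m$ for which, for each $i$, the contents $A_i$ and $B_{\pi(i)}$ correspond to plane forests isomorphic as non-plane forests.

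Now the inductive step. Each $A_i$ has strictly fewer arches than $A$, so by the induction hypothesis $A_i \sim B_{\pi(i)}$ for every $i$. Rule~(\ref{eq:archOverEquivalence}) then gives $a_i = \archOver{A_i} \sim \archOver{B_{\pi(i)}} = b_{\pi(i)}$. Applying rule~(\ref{eq:substituteAtoms}) $m$ times, one atom at a time (at the $i$-th step take $P = b_{\pi(1)} \cdots b_{\pi(i-1)}$ and $Q = a_{i+1} \cdots a_m$, which are concatenations of atoms), we obtain
\[
A = a_1 a_2 \cdots a_m \;\sim\; b_{\pi(1)} b_{\pi(2)} \cdots b_{\pi(m)}.
\]
Finally, since adjacent transpositions generate $S_m$, finitely many applications of rule~(\ref{eq:commuteAtoms}) (each swapping two adjacent atoms in a concatenation) rearrange $b_{\pi(1)} b_{\pi(2)} \cdots b_{\pi(m)}$ into $b_1 b_2 \cdots b_m = B$. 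Chaining these equivalences and using transitivity of $\sim$ yields $A \sim B$.

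I do not anticipate a genuine obstacle: the four rules of Definition~\ref{de:sim} were chosen precisely so that $\sim$ is a congruence that, in addition, is insensitive to the left-to-right order of siblings, and the only content is routine bookkeeping for the induction. The two points worth a careful sentence each are the reduction to the nonempty case together with the base case, and the remark that rule~(\ref{eq:commuteAtoms}) combined with the fact that adjacent transpositions generate $S_m$ suffices to realise an arbitrary permutation of the top-level atoms; rule~(\ref{eq:archOverEquivalence}) then handles the recursion into the contents.
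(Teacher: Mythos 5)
Your proof is correct and follows essentially the same route as the paper's: induction on size, using rule~(\ref{eq:archOverEquivalence}) to handle the atom/tree case, rule~(\ref{eq:substituteAtoms}) to replace atoms by equivalent ones, and rule~(\ref{eq:commuteAtoms}) to realise the permutation of the top-level atoms (the paper merely performs the reordering step before, rather than after, the substitutions).
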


\begin{proof}
This follows directly by induction from rules (\ref{eq:archOverEquivalence}), (\ref{eq:substituteAtoms}) and (\ref{eq:commuteAtoms}). 
Specifically, suppose that plane forests $A$ and $B$ which are isomorphic as non-plane forests are given and that the result holds for all plane forests of lesser size. 
If $A$ and $B$ are trees (corresponding to atoms in the context of arch systems), 
then the result applies to the forests obtained by deleting their roots (i.e.~the contents of these atoms), and hence by rule (\ref{eq:archOverEquivalence}) to $A$ and $B$. 
Otherwise, each of $A$ and $B$ is the concatenation of the same number of trees (i.e.~atoms), say $m$. 
First, using rule (\ref{eq:commuteAtoms}) we can find $A' \sim A$ so that $A' = a_1 a_2 \dots a_m$, $B = b_1 b_2 \dots b_m$, and each tree $a_i$ is isomorphic $b_i$. 
Then using rule (\ref{eq:substituteAtoms}) we are done.
\end{proof}

We note that this proposition already establishes that there are no more cohorts for $n$ element structures 
than there are rooted non-plane forest with $n$ nodes, or equivalently rooted non-plane \emph{trees} with $n+1$ nodes. 
As the asymptotic enumeration of these (see for example~\cite{Flajolet:Analytic}*{Proposition VII.5 and note VII.21}) has exponential growth rate approximately $2.956$ 
we already see exponentially fewer Wilf-equivalence classes than there are structures of size $n$. 
However, the final rule provides a further reduction.

Let us focus our attention on $\sim$-equivalences \emph{between atoms (or trees) only} that may be derived from rule (\ref{eq:binaryRotation}). 
In this context, an equivalent form of this rule is $\archOver{a \archOver{bc}} \sim \archOver {\archOver{ab} c}$. 
So in terms of trees, rule (\ref{eq:binaryRotation}) allows us to rotate subtrees at binary branches. 
Furthermore, it also allows unary nodes to be lifted through binary ones (from the case when $c$ is empty) via $\archOver{a \archOver{b}} \sim \archOver{\archOver{ab}}$. 
Finally, in the case were $b$ and $c$ are empty, rule (\ref{eq:binaryRotation}) rewrites as $\archOver{a \emptyArch} \sim \archOver{\archOver{a}}$, 
allowing to transform a leaf hanging below a binary node $x$ into a unary node between $x$ and its other child. 
These operations on trees are shown in Figure~\ref{fi:binaryRuleOnTrees}.

\begin{figure}
\centerline{
\begin{tikzpicture}[shape=circle, scale = 0.5]
\node[fill,circle,minimum size=4, inner sep = 0] at (-0.4,0) {} 
  child{
    node[inner sep = 0] {$T_a$}
  }
  child{
     node[fill,circle,minimum size=4, inner sep = 0] {}
       child{
       	node[inner sep = 0] {$T_b$}
	}
	child{
       	node[inner sep = 0] {$T_c$}
	}
  };  
\draw (1.8,-1) node {$\sim$}; 
\node[fill,circle,minimum size=4, inner sep = 0] at (4,0) {} 
  child{
     node[fill,circle,minimum size=4, inner sep = 0] {}
       child{
       	node[inner sep = 0] {$T_a$}
	}
	child{
       	node[inner sep = 0] {$T_b$}
	}
  }
  child{
    node[inner sep = 0] {$T_c$}
  };
\draw (6,-1) node {;}; 
\node[fill,circle,minimum size=4, inner sep = 0] at (8,0) {} 
  child{
     node[fill,circle,minimum size=4, inner sep = 0] {}    
     child{
       node[inner sep = 0] {$T_a$}
     }
  }
  child{
     node[inner sep = 0] {$T_b$}       
  };  
\draw (9.8,-1) node {$\sim$}; 
\node[fill,circle,minimum size=4, inner sep = 0] at (11,0) {}
     child{
     node[fill,circle,minimum size=4, inner sep = 0] {}
       child{
       	node[inner sep = 0] {$T_a$}
	}
       child{
       	node[inner sep = 0] {$T_b$}
	}
  };
  \draw (12.2,-1) node {$\sim$}; 
\node[fill,circle,minimum size=4, inner sep = 0] at (14,0) {} 
  child{
     node[inner sep = 0] {$T_a$}
  }
  child{
     node[fill,circle,minimum size=4, inner sep = 0] {}    
     child{
       node[inner sep = 0] {$T_b$}
     }
  };
\draw (16.5,-1) node {;}; 
\node[fill,circle,minimum size=4, inner sep = 0] at (19,0) {} 
  child{
    node[inner sep = 0] {$T_a$}
  }
  child{
     node[fill,circle,minimum size=4, inner sep = 0] {}       
  };  
\draw (20.5,-1) node {$\sim$}; 
\node[fill,circle,minimum size=4, inner sep = 0] at (21.5,0) {}
     child{
     node[fill,circle,minimum size=4, inner sep = 0] {}
       child{
       	node[inner sep = 0] {$T_a$}
	}
  };
  \draw (22.5,-1) node {$\sim$}; 
\node[fill,circle,minimum size=4, inner sep = 0] at (24,0) {} 
  child{
     node[fill,circle,minimum size=4, inner sep = 0] {}
  }
  child{
    node[inner sep = 0] {$T_a$}
  };
\draw (1.8,-4.5) node {$(i)$};
\draw (11,-4.5) node {$(ii)$};
\draw (21.5,-4.5) node {$(iii)$};
\end{tikzpicture}
}
\caption{$\sim$-equivalences on trees that are derived from rule (\ref{eq:binaryRotation}).
}
\label{fi:binaryRuleOnTrees}
\end{figure}
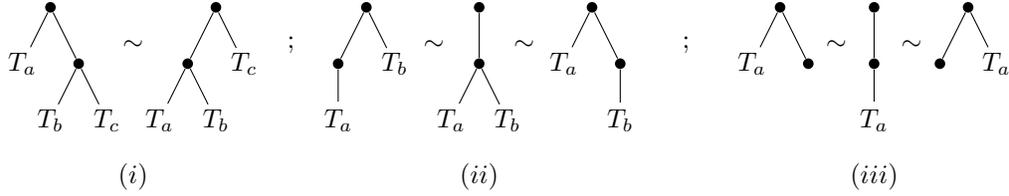

So, consider any subtree of a plane forest that has a binary root. 
In this tree replace any subtree whose root has three or more children by a symbol representing that atom (and temporarily call such atoms, \emph{large}). 
As a result we obtain a tree, $T$, all of whose internal nodes have one or two children and where the leaves are either large atoms, or bare nodes. 
As shown in Figure~\ref{fi:binaryRuleOnTrees}$(ii)$ and $(iii)$, we can lift the unary nodes and bare nodes through the binary ones 
to obtain a $\sim$-equivalent tree $T'$ with a chain of unary nodes running from the root, connected to a full binary tree all of whose leaves are labelled with large atoms.  
Finally, we can rotate the large atoms (see Figure~\ref{fi:binaryRuleOnTrees}$(i)$), permute them (from $PabQ \sim PbaQ$), 
and replace them by equivalent large atoms (from $a \sim b \Rightarrow PaQ \sim PbQ$). 
So we see that two such full binary trees (with leaves that are large atoms) are $\sim$-equivalent if and only if they have the same number of nodes (and hence leaves) 
and there is a bijection between their sets of leaves such that items in correspondence in these sets are $\sim$-equivalent large atoms. 
More properly, note that these ``sets'' of leaves are actually multisets, since repetitions are allowed. 

For ease of explanation, in the rest of this section we will focus on \emph{atomic cohorts}, i.e.~cohorts that contain at least one atom (or tree). 
Note that this is not an actual restriction: 
atomic cohorts for $(n+1)$ element structures are in bijective correspondence with cohorts for $n$ element structures, 
since $A \sim B \Leftrightarrow \archOver{A} \sim \archOver{B}$. 

The above discussion leads to a recursive description of (representatives for) atomic cohorts. 
Consider the recursive specification of a variety, $\A$ of non-plane tree-like structures:
\begin{equation}
\label{eq:recursiveCohortReps}
\begin{aligned}
 \A &=  \bullet \uplus (\bullet, \A)  \uplus \ \biguplus_{k\geq 2} (\bigtriangleup_{k-1}, \mset_k(\B)) \uplus \B \\
 \B &=  (\bullet, \mset_{\geq 3}(\A)),
\end{aligned}
\end{equation}
where $\bullet$ refers to a class with a single object of size $1$, 
parentheses denote ordered pairs, 
$\bigtriangleup_m$ denotes a class with a single object of size $m$, 
$\uplus$ denotes disjoint union, 
and $\mset$ denotes the multiset construction, with the subscript denoting the number of elements in the multiset. 
Equivalently, as non-plane trees: 
\begin{equation*}
 \A =  \bullet +
\tikz[scale=0.35,baseline=-0.3cm]{
 \node[fill,circle,minimum size=4, inner sep = 0] {}
       child{
       	node[inner sep = 0] {$\A$}
	};
}
+ \ \sum_{k\geq 2} 
\tikz[scale=0.35,baseline=-0.3cm]{
 \node[inner sep = 0] {~ $\bigtriangleup_{k-1}$}
       child{
       	node[inner sep = 0] {$\B$}
	}
       child{
        node[inner sep = 0] {$\ldots$}
        edge from parent [draw=none]}
       child{
       	node[inner sep = 0] {$\B$}
	};
\draw[decorate,decoration={brace}] (2.2,-2) -- (-2.2,-2) node[below,pos=0.5] {$k$ children};
}
+ \B \text{\quad and \quad}
\B =  \hspace*{-0.5cm} \tikz[scale=0.35,baseline=-0.3cm]{
 \node[fill,circle,minimum size=4, inner sep = 0] {}
       child{
       	node[inner sep = 0] {$\A$}
	}
       child{
        node[inner sep = 0] {$\ldots$}
        edge from parent [draw=none]}
       child{
       	node[inner sep = 0] {$\A$}
	};
\draw[decorate,decoration={brace}] (2.2,-2) -- (-2.2,-2) node[below,pos=0.5] {$\geq 3$ children};
}
\end{equation*}

\begin{proposition}
There is a size-preserving bijection between atomic cohorts and $\A$.
\end{proposition}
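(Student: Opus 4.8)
The plan is to exhibit a map from atomic cohorts to objects of $\A$ (via the canonical tree representative described in the text) and to show it is a well-defined bijection. Concretely, given an atomic cohort, pick any tree $T$ representing it. Following the discussion preceding the proposition, rewrite $T$ using rule~(\ref{eq:binaryRotation}) (in its tree form, Figure~\ref{fi:binaryRuleOnTrees}) so that all unary and bare nodes are lifted above the binary part: every maximal subtree whose root has $\geq 3$ children becomes a \emph{large atom}, which lies in $\B$; the remaining skeleton is a chain of unary nodes from the root leading into a full binary tree whose leaves are large atoms. Assigning to the full binary tree with $k$ leaves the object $(\bigtriangleup_{k-1},\mset_k(\B))$ (the multiset of leaf-labels, since rotation plus commutation lets us forget the left--right order), and to each top unary node the $(\bullet,\A)$ constructor, and reading a bare-node representative as the base $\bullet$, produces an element of $\A$. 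Recursion is on size: the large atoms feeding into $\B = (\bullet,\mset_{\geq 3}(\A))$ have strictly smaller contents, so the construction terminates and each ``content'' cohort is itself recursively sent into $\A$.

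The key steps, in order, are: (1) \emph{Well-definedness} --- show the element of $\A$ obtained does not depend on which representative $T$ of the cohort we started from, nor on the order in which the tree rewrites of Figure~\ref{fi:binaryRuleOnTrees} are applied. This reduces to checking that any two trees related by a single application of one of rules~(\ref{eq:archOverEquivalence})--(\ref{eq:binaryRotation}) yield the same $\A$-object, which in each case is immediate from how the construction records data up to multiset equality and up to the recursively-defined equivalence on contents. (2) \emph{Surjectivity} --- every element of $\A$ is visibly realised: a chain of $(\bullet,\A)$'s over a full binary tree of $\B$'s corresponds to an explicit plane tree (choose any plane embedding and any plane representatives of the large atoms), and that plane tree is an atom. (3) \emph{Injectivity} --- if two atomic cohorts map to the same $\A$-object, then their canonical representatives $T'_1$ and $T'_2$ have the same unary-chain length, the same full-binary shape, and a leaf-bijection matching $\sim$-equivalent large atoms; but those are exactly the moves allowed by rules~(\ref{eq:archOverEquivalence})--(\ref{eq:binaryRotation}) (rotation, permutation of atoms $PabQ\sim PbaQ$, and substitution of $\sim$-equivalent atoms $a\sim b\Rightarrow PaQ\sim PbQ$), so the two trees are $\sim$-equivalent and the cohorts coincide. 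Size-preservation is clear since none of the rewrites changes the node count and $\bigtriangleup_{k-1}$ is assigned size $k-1$ precisely to account for the $k-1$ internal binary nodes.

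I would organise (1)--(3) around the ``canonical form'' already sketched in the text: first prove a normalisation lemma stating that every atom is $\sim$-equivalent to a unique tree of the shape ``unary chain over full binary tree with large-atom leaves, leaves recorded as a multiset'', with uniqueness meant up to the recursive equivalence on the contents of the large atoms; then observe this is a verbatim description of $\A$ under the specification~(\ref{eq:recursiveCohortReps}). With such a lemma in hand, the bijection and its inverse are both simply ``read off the canonical form'', and (2),(3) become bookkeeping.

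The main obstacle is the \emph{well-definedness / uniqueness of the canonical form} --- showing that applying the rewrites of Figure~\ref{fi:binaryRuleOnTrees} in different orders (and starting from different cohort representatives) always lands on the same $\A$-object. The subtlety is that the rewrites interact: lifting a unary node past a binary node changes which subtrees are siblings, and one must confirm that the eventual multiset of large atoms and the eventual unary-chain length are invariants. I expect this is handled cleanly by induction on size together with a confluence-style argument: define the invariants (chain length; multiset of large-atom contents, each taken up to the size-smaller instance of $\sim$) directly on an arbitrary representative, and check that each of the four generating rules of $\sim$ preserves them. The earlier cases of Theorem~\ref{th:mainTheorem} are not needed here; only the combinatorics of rules~(\ref{eq:archOverEquivalence})--(\ref{eq:binaryRotation}) and the already-proved fact $A\sim B\Leftrightarrow\archOver{A}\sim\archOver{B}$, which legitimises the reduction to atomic cohorts.
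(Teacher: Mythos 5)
Your proposal is correct and takes essentially the same route as the paper: the paper's proof is a direct translation of the preceding normal-form discussion (lift unary and bare nodes, rotate at binary branches, record the multiset of large atoms, recurse inside them), which is exactly the canonical-form map you construct and then read off against the specification~(\ref{eq:recursiveCohortReps}). The well-definedness, injectivity and surjectivity checks you outline are precisely the details the paper leaves implicit, and your plan for them (invariance of the recorded data under each of rules~(\ref{eq:archOverEquivalence})--(\ref{eq:binaryRotation}) by induction on size, plus explicit realisation of every $\A$-object by an atom) is sound.
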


\begin{proof}
This is basically simply a direct translation of the preceding discussion, 
where we have unravelled all possible equivalences following from rules (\ref{eq:archOverEquivalence}) to (\ref{eq:binaryRotation}). 
The class $\B$ represents ``large atoms''. Then the elements of $\A$ are described in order as: 
a single node, a root with one child, an atom corresponding to a full binary tree with $k$ leaves labelled by large atoms, or a large atom.
\end{proof}

We shall use this description to refine the asymptotic enumeration of the number of cohorts.

\subsection{The number of cohorts}
\label{subsec:numberOfCohorts}

The first 15 values of the number of cohorts of arch systems of size $n \geq 1$ are given by:
\[ 
1, \ 1, \ 2, \ 4, \ 8, \ 16, \ 32, \ 67, \ 142, \ 307, \ 669, \ 1\,478, \ 3\,290, \ 7\,390, \ 16\,709, \ldots\]

Furthermore, for each cohort of size up to 15, we can produce a representative arch system $X$ for that cohort, 
and check that the generating functions of the classes $\Av(X)$ are all distinct. 
With Theorem~\ref{th:mainTheorem}, this ensures that the above also shows 
the first few terms of the sequence enumerating Wilf-equivalence classes of classes $\Av(A)$ for $A$ of size $n$. 
Notice that more terms of the enumeration sequence of cohorts may be obtained from Equation~(\ref{eq:cohortGF}) below -- 
namely, the next few terms are $38\,027, \ 86\,993, \ 200\,018, \ 461\,847, \ 1\,070\,675$. 
From Theorem~\ref{th:mainTheorem}, these are upper bounds on the number of Wilf-equivalence classes of $\Av(A)$, 
but we cannot ensure that they are equal (although we suspect they are). 
In the following, we therefore study the asymptotic behaviour of the number of cohorts of arch systems of size $n$. 
 
As already noted, the number of cohorts of arch systems of size $n$ equals the number of \emph{atomic} cohorts of arch systems of size $n+1$. 
Here we can make profitable use of (\ref{eq:recursiveCohortReps}) to provide a functional equation for 
the generating function $A(t) = \sum a_n t^n$ counting atomic cohorts which is susceptible to asymptotic analysis using the techniques of Section VII.5 of \cite{Flajolet:Analytic}, 
or with minor variations of \cite{Harary:Twenty}. Specifically we obtain:
\begin{equation}
\label{eq:cohortGF}
\begin{aligned}
A &= t + t A + \frac{1}{t} M_{\geq 2}(t B) + B \\
B &= t M_{\geq 3} (A) \\
\end{aligned}
\end{equation}
where 
\begin{align*}
M(Z) &= \exp(\frac{Z(t)}{1} + \frac{Z(t^2)}{2} + \frac{Z(t^3)}{3} + \frac{Z(t^4)}{4} + \ldots ) \\
M_{\geq 2} (Z) &= M(Z) - 1 - Z(t) \\
M_{\geq 3} (Z) &= M(Z) - 1 - Z(t) - \frac{1}{2} \left( Z(t)^2 + Z(t^2) \right) 
\end{align*}
are operators representing the generating functions that enumerate multisets of objects, 
and respectively such multisets of size at least 2 or 3 counted by the generating function $Z$.

Clearly the power series $A$ dominates $t + t M_{\geq 3}(A)$ term by term, 
and so $a_n$ is at least the number of non-plane trees with $n$ nodes in which each internal node has at least 3 children. 
This trivial estimate suffices to show that the radius of convergence, $\rho_A$, of $A$ is less than $1$ (and hence so is that of $B$). 
Now observe that in general
\begin{equation}
\label{eq:expFormOfM}
M(Z) = \exp(Z) \exp(W)
\end{equation}
where 
\[
W = \frac{Z(t^2)}{2} + \frac{Z(t^3)}{3} + \frac{Z(t^4)}{4} + \ldots
\]
If the radius of convergence of $Z$ is $r < 1$, then the radius of convergence of $W$ is easily seen to be at least $\sqrt{r} > r$. 
This suggests that when analysing the radius of convergence of generating functions defined by functional equations involving the $M$ operator, 
we treat these as implicit definitions of the desired function in terms of ``known'' analytic functions which, 
while related to the function we are analysing are analytic in a disc around the origin strictly containing the radius of convergence of the function we seek.
Effectively these are the first five steps of \cite{Harary:Twenty}. 
So to proceed we view (\ref{eq:cohortGF}) as an implicit definition of $A$ in terms of these ``known'' functions 
after having eliminated $B$ entirely and noting also that the terms corresponding to $Z(t^2)$ in any occurrences of $M_{\geq 3}$ should also be treated as ``known''. 
Thus we aim to find the radius of convergence of the solution to $F(t,y) = 0$ where:
\[
F(t, y) = -y + t + t y + t M_{\geq 3} (y) + M_{\geq 2}( t^2 M_{\geq 3} (y))/t
\]
In this expression we replace the subscripted $M$ operators by their definitions above, 
and then on the remaining occurrences of $M$ use the form given by \ref{eq:expFormOfM} 
to replace the definition of $F$ by one involving $y$, $t$ and some functions of $t$ known to be analytic on the domain of interest. 
Continuing with the steps of \cite{Harary:Twenty} as we know already that the solution $y$ is a generating function 
we can find its radius of convergence $\rho_A$ by determining the smallest positive root of the equation $F_y (t, y) = 0$ (where $F_y$ is the derivative of $F$ with respect to $y$).

Of course in finding this root we first take the derivative formally 
and then replace $y$ and all the related ``known'' functions by polynomial approximations of some degree, denoted $n$, 
obtained by using equation (\ref{eq:cohortGF}) as a recurrence for generating terms of $A$. 
The results of these approximations for various values of $n$ are as follows:
\[
\begin{array}{ccc}
n & \rho_A & 1/\rho_A \\ \hline
50 & 0.4069 & 2.4575 \\
100 & 0.4083 & 2.4763 \\
200 & 0.4022 & 2.4863 \\
400 & 0.4014 & 2.4916 \\
800 & 0.4009 & 2.4943
\end{array}
\]
These values agree well with the numerical estimates obtained by simply looking at computed coefficients of $A$ 
and fitting an asymptotic expression of the form $a_n \sim c n^{-3/2} \gamma^n$. 
Note however that the apparent accuracy is significantly less than that given in examples VII.21 and VII.22 of \cite{Flajolet:Analytic}. 
We suspect that this arises due to the iterated application of $M$ and the correction terms that are part of the definitions of $M_{\geq 2}$ and $M_{\geq 3}$. 
Another possible reason is that we also truncate the ``known'' parts at degree $n$. 
Approximate values of $\alpha$ and $\gamma$ are $\alpha \approx 0.454$ and $\gamma \approx 2.4975$. 

To justify the asymptotic form used above, thereby reaching step 14 of the 20 steps (which is where we intend to stop) 
requires checking that $F_{yy} (\rho_A, A(\rho_A)) \neq 0$. Fortunately, we can compute $F_{yy}$ modulo some ``known'' functions (in the usual sense) as:
\begin{align*}
F_{yy} &= e^{p(y)} t  \left( t^2 ( 1 + y - e^y c_1(t))^2 + e^y c_1(t) - 1 \right), \: \mbox{where} \\
p(y) &= (-1/2) t^2 ( 2 + 2 y + y^2 + c_2(t) - 2 e^{y} c_3(t) ) + c_4(t),
\end{align*}
with $c_2$, $c_3$ and $c_4$ analytic and real at $\rho_A$. 
Further $c_1(t) = \exp{  w(t) }$ where $w(t)$ is a series with positive coefficients. 
So $e^y c_1 - 1 > 0$ at $t = \rho_A$ and thus $F_{yy}(\rho_A, A(\rho_A)) \neq 0$.

Recall that atomic cohorts of arch systems with $n+1$ arches are in bijection with cohorts of arch systems 
with $n$ arches, so to obtain the general asymptotics we multiply the constant term  from the atomic asymptotics by $\gamma$ yielding: 
\begin{theorem}
\label{THM:enumeration_of_cohorts}
The number of cohorts of arch systems with $n$ arches behaves asymptotically as $c n^{-3/2} \gamma^n$, 
where $c \approx 1.13$ and $\gamma \approx 2.4975$. 
\end{theorem}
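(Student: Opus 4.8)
The plan is to prove the asymptotic estimate by singularity analysis applied to the functional equation~(\ref{eq:cohortGF}), following the scheme of Section~VII.5 of~\cite{Flajolet:Analytic} (equivalently, the first fourteen of the twenty steps of~\cite{Harary:Twenty}). First I would reduce to atomic cohorts: since atomic cohorts with $n+1$ arches are in bijection with cohorts with $n$ arches, and atomic cohorts are in size-preserving bijection with the class $\A$ of~(\ref{eq:recursiveCohortReps}), it suffices to show that the coefficients $a_n$ of $A(t)=\sum a_n t^n$ satisfy $a_n\sim \alpha\, n^{-3/2}\gamma^n$; the stated constant is then $c=\alpha\gamma$, consistent with $\alpha\approx 0.454$, $\gamma\approx 2.4975$ and $c\approx 1.13$.

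Next I would set up the ``known function'' formalism. Eliminating $B$ from~(\ref{eq:cohortGF}) expresses $A$ implicitly as $F(t,A)=0$, where
\[
F(t,y) = -y + t + ty + t\,M_{\geq 3}(y) + \frac{1}{t}\,M_{\geq 2}\bigl(t^2 M_{\geq 3}(y)\bigr).
\]
Substituting the definitions of $M_{\geq 2}$ and $M_{\geq 3}$, and then on the remaining bare $M$ operators the splitting $M(Z)=e^{Z(t)}e^{W}$ from~(\ref{eq:expFormOfM}) with $W$ collecting the $Z(t^2),Z(t^3),\dots$ contributions, turns $F$ into an expression in $y$, $t$, and a handful of auxiliary series in $t$. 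The key preliminary fact is that $\rho_A<1$: since $A$ dominates $t+tM_{\geq 3}(A)$ term by term, $a_n$ is at least the number of non-plane trees on $n$ nodes all of whose internal nodes have $\geq 3$ children, and the growth rate of the latter exceeds $1$. Consequently every series $Z(t^j)$ with $j\geq 2$ occurring inside the $M$ operators (as well as the explicit $Z(t^2)$ corrections in $M_{\geq 3}$) has radius of convergence at least $\rho_A^{1/2}>\rho_A$, so after the substitutions $F(t,y)$ is a genuine bivariate function, analytic in $y$ and analytic in $t$ on a disc strictly larger than $|t|<\rho_A$, with nonnegative Taylor coefficients.

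I would then locate the dominant singularity. Because $A$ has nonnegative, aperiodic coefficients (aperiodicity is immediate from the terms $t$ and $tA$ in~(\ref{eq:cohortGF})) and $\rho_A<1$, the smooth implicit-function schema applies: $\rho_A$ is the smallest positive $t$ admitting a solution of $F(t,y)=0$, $F_y(t,y)=0$ simultaneously, the singularity of $A$ at $\rho_A$ is then of square-root type provided $F_t\neq 0$ (clear) and $F_{yy}\bigl(\rho_A,A(\rho_A)\bigr)\neq 0$, and the standard transfer theorem gives $a_n\sim \alpha\, n^{-3/2}\gamma^n$ with $\gamma=1/\rho_A$ and $\alpha$ expressed through $F_t$ and $F_{yy}$ at the singular point. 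The numerical values of $\gamma$ and $\alpha$ come from solving $F_y(t,y)=0$ numerically, using~(\ref{eq:cohortGF}) as a recurrence to generate polynomial approximations of $A$ and of the auxiliary ``known'' series.

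\textbf{Main obstacle.}
The only place requiring work beyond bookkeeping is verifying $F_{yy}\bigl(\rho_A,A(\rho_A)\bigr)\neq 0$, since the iterated exponentials make $F_{yy}$ unwieldy. Carrying out the differentiation and simplifying modulo the auxiliary series yields
\[
F_{yy} = e^{p(y)}\, t\left( t^2\bigl(1 + y - e^{y} c_1(t)\bigr)^2 + e^{y} c_1(t) - 1\right),
\]
with $p(y)$ and $c_2,c_3,c_4$ analytic and real at $\rho_A$ and $c_1(t)=\exp(w(t))$ for a series $w$ with positive coefficients. Since $\rho_A>0$ gives $c_1(\rho_A)\geq 1$, we get $e^{y}c_1-1>0$ at the singular point, and as the first summand is nonnegative, $F_{yy}\bigl(\rho_A,A(\rho_A)\bigr)>0$. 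This reaches step~14 of~\cite{Harary:Twenty}, which is where we stop, and combined with the bijection reductions of the first paragraph completes the proof.
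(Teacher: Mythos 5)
Your proposal is correct and follows essentially the same route as the paper: the reduction to atomic cohorts via the class $\A$, the elimination of $B$ to get an implicit equation $F(t,y)=0$ with the $Z(t^j)$, $j\geq 2$, contributions treated as ``known'' functions analytic beyond $\rho_A$, numerical location of the dominant square-root singularity from $F_y=0$, and the verification $F_{yy}(\rho_A,A(\rho_A))\neq 0$ via the same factored expression and positivity argument. Nothing essential differs from the paper's proof.
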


\section{The main cohort, and comparison between cohorts}

We start this section by defining a special cohort of arch systems of any size $n$ and studying its properties. 
We specifically deal with the number of arch systems contained in this cohort, 
and with the generating function of any class $\Av(X)$ for an arch system $X$ in this cohort.
This will complete the proofs of our claims of the introduction. 
This special cohort is called the main cohort, because it appears to be the largest with respect to two criteria. 

Accordingly, we report in this section some results about the comparison between cohorts (of structures of the same size, $n$) with respect to these two criteria. 
One is the size of these cohorts, i.e.~the number of equivalent arch systems they contain. 
Here, we focus on extremal cases: 
we conjecture that the main cohort is the one with maximal size, 
and we describe \emph{singleton cohorts}, that is: cohorts which contain one single arch system.
Cohorts may also be compared with respect to the (common) generating functions of the classes $\Av(X)$ they represent. 
We provide some rules on arch systems that allow the comparison between the generating functions of their cohorts, 
and show that the main cohort is largest in the sense that its generating function dominates that of any other cohort. 

\subsection{The main cohort}

Following the discussion of Subsection~\ref{subsec:structureCohort}, 
for each $n$ there is a unique cohort of structures of size $n$ that arises from all unary-binary plane forests (i.e.~no large atoms are involved)
-- by definition, such forests consist of at most two trees, which are themselves unary-binary trees. 
We call this the \emph{main cohort} for structures of size $n$ and denote it by $\bigclass_n$. 
A representative of this cohort is the system $N_n$ of $n$ nested arches, whose corresponding forest is a chain of $n$ nodes. 
But from its description in terms of forests, it is clear that the main cohort also includes 
all the arch systems of size $n$ that can be built using the following operations, and only these: 
concatenate two atoms that belong to $\bigclass_j$ and $\bigclass_k$ for $j+k=n$, 
or place an arch over an arch system of $\bigclass_{n-1}$. 
For the same reason, if we let $M_n$ denote the number of atoms (i.e.~trees) of size $n$ in the cohort $\bigclass_n$, 
it is immediate that the generating function $M(t) = \sum M_n t^n$ satisfies:
\[
M = t + t M + t M^2.
\]
This identifies $(M_n)$ as the sequence of Motzkin numbers (offset by 1): 
\[
M_{n+1} = \textrm{Motz}_n = \sum\limits_{k=0}^{\lfloor n/2\rfloor} {n \choose {2k}} \textrm{Cat}_k.
\]

Recalling that the number of atoms in the main cohort for structures of size $n+1$ is equal to 
the total number of arch systems in the main cohort for structures of size $n$, we obtain:
\begin{proposition}
The size of the main cohort for structures of size $n$ is the $n$-th Motzkin number: $|\bigclass_n| = \textrm{\emph{Motz}}_n$.
\label{pr:MotzkinManyInMainCohort}
\end{proposition}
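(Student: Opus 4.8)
The plan is to deduce the count from the structural description of the main cohort in Subsection~\ref{subsec:structureCohort}, together with the elementary fact recorded just after Definition~\ref{de:sim} that $A\sim B\Leftrightarrow\archOver A\sim\archOver B$. In outline: identify $\bigclass_n$ with the set of atoms sitting inside the cohort $\bigclass_{n+1}$, count those atoms using the Motzkin functional equation written down immediately before the statement, and read off $|\bigclass_n|=M_{n+1}=\textrm{Motz}_n$.

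First I would pin down what $\bigclass_n$ is. By Subsection~\ref{subsec:structureCohort} it is a single $\sim$-class, consisting of exactly those arch systems of size $n$ whose associated non-plane forest is unary-binary; equivalently, of those arch systems built up from the empty one by iterating the two operations ``place an arch over an element of $\bigclass_{m-1}$'' and ``concatenate an atom of $\bigclass_j$ with an atom of $\bigclass_k$, $j+k=m$'', where -- since a unary-binary forest consists of at most two trees -- no longer concatenations occur.

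The heart of the proof is then one bijection. The map $X\mapsto\archOver X$ sends an arch system of size $n$ to an atom of size $n+1$ and is a bijection from size-$n$ arch systems onto size-$(n+1)$ atoms; since $A\sim B\Leftrightarrow\archOver A\sim\archOver B$, it carries $\bigclass_n$ exactly onto the set of atoms lying in $\bigclass_{n+1}$. Writing $M_m$ for the number of size-$m$ atoms in the main cohort, this yields $|\bigclass_n|=M_{n+1}$. It remains only to identify $M_{n+1}$ with $\textrm{Motz}_n$: the functional equation $M=t+tM+tM^2$ is derived just above the statement (split an atom $\archOver A$ of $\bigclass_m$ according to whether $A$ has zero, one, or two trees), and from it $M_{n+1}=\textrm{Motz}_n$, as recorded there. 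Putting these together gives $|\bigclass_n|=M_{n+1}=\textrm{Motz}_n$.

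I do not expect a genuine obstacle; the substantive work is already in Subsection~\ref{subsec:structureCohort}. The only delicate point -- handled there -- is that the unary-binary plane forests really do form a \emph{single} $\sim$-class and that the closure description above is complete, so that no ``large atom'' slips into $\bigclass_n$ and nothing in $\bigclass_n$ is omitted; without that, $|\bigclass_n|=M_{n+1}$ could fail in either direction. If one wished to avoid even the functional equation, the same conclusion follows by observing that $\archOver X$ for $X\in\bigclass_n$ is exactly a unary-binary (Motzkin) tree on $n+1$ nodes, and such trees are classically counted by $\textrm{Motz}_n$.
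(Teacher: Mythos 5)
Your proposal is correct and follows essentially the same route as the paper: the paper likewise identifies $\bigclass_n$ with the atoms of $\bigclass_{n+1}$ via $X\mapsto\archOver{X}$ (using $A\sim B\Leftrightarrow\archOver{A}\sim\archOver{B}$ and $\archOver{N_n}=N_{n+1}$) and reads off $M_{n+1}=\textrm{Motz}_n$ from the functional equation $M=t+tM+tM^2$. Your closing remark that the atoms of $\bigclass_{n+1}$ are exactly the unary-binary (Motzkin) trees on $n+1$ nodes is the same structural fact from Subsection~\ref{subsec:structureCohort} that the paper relies on, so no new argument is needed.
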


Furthermore, to $\bigclass_n$ corresponds one generating function: that of any $\Av(X)$ for $X \in \bigclass_n$. 
Taking $X = N_n$, where $N_n$ is the nest of $n$ arches, these generating functions $C_n$ are easily seen to satisfy
\[
C_n = \frac{1}{1 - t C_{n-1}} \text{\quad  (with initial condition } C_0 = 1\text{),}
\]
giving that:
\begin{proposition}
\label{pr:GFofMainCohort}
For any structure $X$ in $\bigclass_n$, the generating function of $\Av(X)$ is $C_n$. 
\end{proposition}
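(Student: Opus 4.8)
The plan is to show that for $X \in \bigclass_n$ the generating function $F_X$ equals $C_n$, and by Theorem~\ref{th:mainTheorem} it suffices to verify this for the single convenient representative $X = N_n$, the nest of $n$ arches. So the statement reduces to computing $F_{N_n}$ and identifying it with $C_n$.

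First I would observe that $N_0$ is the empty arch system, so $\Av(N_0) = \{\text{empty system}\}$ and $F_{N_0} = 1 = C_0$, matching the initial condition. For the inductive step, note that $N_n = \archOver{N_{n-1}}$: it is an atom whose contents are $N_{n-1}$. The cleanest route is to apply Proposition~\ref{pr:classEnumeration} to $A = N_n$, which is itself a single atom, so $m = 1$, $a_1 = N_n$, and $A_1 = N_{n-1}$; the sums over $k \geq 2$ are empty and the middle term $t(F_{a_1} - F_{A_1})F_{a_2\cdots a_m}$ vanishes as well (the factor $F_{\text{empty}}=1$ but $a_1 - A_1$ pairs with an empty tail — more carefully, with $m=1$ the formula collapses to $F_{N_n} = 1 + t F_{N_{n-1}} F_{N_n}$). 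Solving gives $F_{N_n} = 1/(1 - t F_{N_{n-1}})$, which is precisely the recurrence defining $C_n$. By induction $F_{N_n} = C_n$ for all $n$.

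Alternatively, and perhaps more transparently, I would argue directly and combinatorially that $\Av(N_n)$ is exactly the set of arch systems with nesting depth at most $n-1$ (equivalently Dyck paths of height at most $n-1$ under the bijection of Section~\ref{sec:CatalanStructures}), and then decompose such a system as a sequence of atoms, each of which is an arch placed over a system of depth at most $n-2$, i.e.\ an element of $\Av(N_{n-1})$. This yields $F_{N_n} = 1/(1 - t F_{N_{n-1}})$ directly by the sequence construction, giving the same recurrence.

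I do not anticipate a genuine obstacle here: the only point requiring a little care is the degenerate bookkeeping in Proposition~\ref{pr:classEnumeration} when $A$ is a single atom (so that the two sums contribute nothing), and making sure the base case $n=0$ or $n=1$ is handled so the recurrence $C_n = 1/(1 - tC_{n-1})$ is anchored correctly. Everything else is a one-line induction once the recurrence for $F_{N_n}$ is in hand, and the identification with $C_n$ is then immediate from the definition of the $C_n$ given in the statement.
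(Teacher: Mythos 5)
Your proof follows essentially the same route as the paper: reduce to the representative $N_n$ via Theorem~\ref{th:mainTheorem}, then obtain the recurrence $F_{N_n}=1/(1-t\,F_{N_{n-1}})$ from the sequence-of-atoms decomposition (equivalently the degenerate $m=1$ case of Proposition~\ref{pr:classEnumeration}), which is exactly the ``easily seen'' recurrence the paper invokes. The only slip is the anchoring: $\Av(N_0)$ is empty (every arch system contains the empty system), so $F_{N_0}=0$ and the induction should start at $n=1$, where $\Av(N_1)$ consists of the empty system alone --- the same indexing convention the paper itself uses implicitly.
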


This justifies the remarks concerning the sequence of generating functions $(C_n)$ made in the introduction. 

Note that Proposition~\ref{pr:GFofMainCohort} provides an alternative proof of the enumeration of $\Av(231,\pi)$ (by $C_{n}$ for $n=|\pi|$) for several families of patterns $\pi$
that appear in the literature: namely decreasing patterns and patterns of the form $1 n(n-1)\dots 3 2$~\cite{ChW}, 
reverse of 2-layered permutations and 132-avoiding wedge-patterns of~\cites{Mansour:Restricted,Mansour:Chebyshev}, 
and patterns $\lambda_k \oplus \lambda_{n-k}$ of~\cite{FPSAC2013}.
Indeed, all such patterns belong to the main cohort of the corresponding size. 

For any structure $A$ in $\bigclass_n$, it is easy to see that 
there exists a chain of $\sim$-equivalences from $A$ to $N_n$ that never uses rule~(\ref{eq:binaryRotation}) with all of $a$, $b$ and $c$ not empty. 
So the same holds for any pair of structures $A$ and $B$ in $\bigclass_n$. 
Therefore, the bijective proofs of Subsection~\ref{subsec:bijectiveProofs} provide, 
for any such pair, a bijection between $\Av(A)$ and $\Av(B)$. 
A special case of this statement answers a question raised in~\cite{Mansour:Chebyshev}, about the description of a bijection between 
$\Av(132,\pi)$ and $\Av(132,\tau)$, for $\pi$ any 2-layered pattern and $\tau$ any 132-avoiding wedge-pattern. 

The name \emph{main} cohort has been chosen because we suspect that this cohort is the largest in two senses. 
We shall see in Subsection~\ref{subsec:comparingGFofCohorts} that 
$C_n$ dominates (term by term) the generating function $F_X$ of $\Av(X)$
for any arch system $X$ of size $n$. 
Moreover, unless $X \in \bigclass_n$, eventually $C_n$ dominates $F_X$ strictly. 

Since the main cohort is constructed using the smallest building blocks 
i.e.~any other cohort must involve somehow one or more atoms consisting of at least four arches (such as $\archOver{\emptyArch \emptyArch \emptyArch}$) 
it seems natural to suspect that among the cohorts of $n$-arch systems, the main cohort is largest. 
Turning this intuition into a proof is however far from immediate, and we offer the following conjecture: 

\begin{conjecture}
\label{conj:mainCohortIsBiggest}
For every positive integer $n \geq 3$ the size of $\bigclass_n$ is greater than the size of any other cohort of an arch system of size $n$.
\end{conjecture}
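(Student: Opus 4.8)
The plan is to compare cohorts through their tree representatives, using the bijection with the variety $\A$ from~(\ref{eq:recursiveCohortReps}). Fix $n \geq 3$. By the remark that atomic cohorts of size $n+1$ biject with all cohorts of size $n$, it suffices to show that among atomic cohorts of size $n+1$ the main one (the unary-binary tree representative with a chain of unary nodes over a full binary tree, no large atoms) maximises the number of arch systems it contains. So for each representative $T \in \A$ of an atomic cohort I would let $w(T)$ denote the \emph{width} of that cohort, i.e.\ the number of distinct arch systems (equivalently plane forests) $\sim$-equivalent to $T$, and the goal is to prove $w(T)$ is maximised, for each fixed size, by the purely unary-binary representative.

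First I would derive a formula, or at least a clean recursive upper bound, for $w(T)$. Unravelling the bijective arguments of Subsection~\ref{subsec:bijectiveProofs}: a cohort representative is a full binary tree $\beta$ whose leaves carry a multiset of ``large atom'' labels $\{L_1^{m_1}, \dots, L_r^{m_r}\}$ (pairwise inequivalent large atoms $L_i$ with multiplicity $m_i$), sitting below a chain of $u \geq 0$ unary nodes. The arch systems in this cohort are obtained by (a) choosing a plane binary shape and a placement of the unary/bare nodes consistent with the lifting moves of Figure~\ref{fi:binaryRuleOnTrees}, (b) choosing, for each leaf, one arch system from the cohort of its large-atom label. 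This gives a multiplicative structure: $w(T)$ is a ``shape factor'' depending only on $(\beta, u)$ and the multiplicities $m_i$, times $\prod_i w(L_i)^{m_i}$. The key structural point to establish is that the shape factor for a full binary tree on $k$ leaves with a length-$u$ unary chain is itself bounded above by the width of the main cohort on the same total number of nodes — intuitively because replacing a large atom by a unary-binary subtree of the same size can only increase the number of reachable plane forests (more internal nodes of degree $\le 2$ means more rotation/lifting freedom, and no large atom is ``rigid'' enough to compensate).

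The main step is then an exchange/induction argument: take any atomic cohort representative $T$ of size $n+1$ that is not the main one, so some leaf carries a large atom $L$ of size $s \geq 4$ (the smallest being $\archOver{\emptyArch\emptyArch\emptyArch}$). Replace $L$ by a size-$s$ unary-binary representative $L'$; by the multiplicative decomposition it suffices to check the single-replacement inequality $w(L) \cdot (\text{rest}) \le w(L') \cdot (\text{rest})$, i.e.\ that the main cohort of size $s$ has width at least that of \emph{any} cohort of size $s$ — which is exactly Conjecture~\ref{conj:mainCohortIsBiggest} for the smaller parameter $s < n+1$. Thus the statement reduces, by induction on $n$, to: (i) a base case, to be checked directly for small $n$ (the listed values $1,1,\dots$ show $\bigclass_n$ wins through $n=15$, and one needs it at least up to the first size where a large atom appears, $n=4$ with atom $\archOver{\emptyArch\emptyArch\emptyArch}$); and (ii) the monotonicity claim that enlarging the set of internal-node degrees available (from ``arbitrary'' down to ``$\le 2$'') never decreases cohort width, proved by exhibiting an injection from the plane forests of the $L$-cohort into those of the $L'$-cohort. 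The hard part will be (ii): making the injection explicit and checking it is well-defined requires tracking how a large atom's children get redistributed when it is ``opened up'' into binary branchings, and ensuring distinct plane forests stay distinct after the substitution — this is where a naive argument is most likely to fail, since rotations of the new binary nodes can collapse images, so the injection must be built to land in a canonical (e.g.\ left-combed) sub-family of the target cohort whose size is still provably at least $w(L)$.
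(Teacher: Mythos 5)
This statement is posed in the paper as an open problem: the authors explicitly say that turning the ``smallest building blocks'' intuition into a proof is far from immediate, and they offer no argument for it. So to be acceptable your submission would have to be a complete proof, and what you have written is a plan whose decisive step is left open --- you say yourself that part (ii) is ``where a naive argument is most likely to fail.'' As it stands this is not a proof of the conjecture.

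The concrete gap is in the exchange step, and it is worse than a technicality. Your reduction assumes that the cohort width factors as a shape factor for the skeleton $(\beta,u)$ times $\prod_i w(L_i)^{m_i}$, and that replacing one large atom $L$ by a unary--binary atom $L'$ of the same size only changes the factor $w(L)$ to $w(L')$, so that induction on the size of $L$ finishes the job. But after the replacement the tree is no longer of the form ``binary skeleton with large-atom leaves with the same skeleton'': the nodes of $L'$ have at most two children, so they merge into the ambient skeleton, rule~(\ref{eq:binaryRotation}) rotations and liftings now act across the old boundary, and the canonical representative of the new cohort is a strictly larger binary skeleton with one fewer large-atom leaf. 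Consequently the inequality you actually need is of the form $\mathrm{shape}(\text{old skeleton})\cdot w(L) \le \mathrm{shape}(\text{new, larger skeleton})$, a comparison between a cohort width and a shape factor of a \emph{different} skeleton, which the inductive hypothesis (the conjecture at smaller size) says nothing about; the injection you defer to step (ii) is exactly this missing content, and you have identified the obstruction (images collapsing under rotations of the new binary nodes) without resolving it. In addition, even the multiplicative formula for $w(T)$ needs justification when the multiset of large-atom labels has repetitions or $\sim$-equivalent entries, since distinct choices of cohort members and leaf placements can produce the same plane forest, so the ``clean recursive upper bound'' you start from is itself not yet established.
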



\subsection{Singleton cohorts}

At the other end of the chain, it is amusing to consider the cohorts that contain only a single arch system. 
Modulo Conjecture \ref{co:cohortDeterminesGF} these correspond to the only arch systems, $A$, that can be recognised directly from the generating function of $\Av(A)$. 

\begin{proposition}
\label{pr:characterOfSingletonCohorts}
The cohort of a (non empty) arch system $A$ is a singleton if and only if:
\begin{itemize}
\item
$A = b^k$ where $k \geq 3$ and $b$ is an atom which is the only atom in its cohort\footnote{
Note that this condition is less restrictive than the cohort of $b$ being a singleton.}, or 
\item
$A = a^2$ where $a$ is an atom whose contents are some $b^k$ as in the first condition, or
\item
$A$ is an atom whose contents are either empty or some $b^k$ as in the first condition. 
\end{itemize}
Moreover, the atoms which are the only atoms in their cohort are:
$\emptyArch$ and the atoms whose (non empty) contents belong to a singleton cohort. 
\end{proposition}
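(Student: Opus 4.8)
The plan is to phrase everything in terms of \emph{rigidity}: call an arch system $A$ \emph{rigid} if no application of one of the four generating rules of Definition~\ref{de:sim} — possibly inside the contents of an atom occurring in $A$, or among two adjacent atoms of a factorisation occurring in $A$ — turns $A$ into a different arch system. Because each generating rule is reversible, the cohort of $A$ is the singleton $\{A\}$ precisely when $A$ is rigid: along any chain of moves leading out of $A$, the first move that actually changes the current term changes $A$ itself. I would then characterise rigid arch systems (and, along the way, atoms that are alone in their cohort) by a single induction on the number of arches, which is legitimate since ``passing to the contents of an atom'' strictly decreases size.

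First I would record when each rule can fire on $A=a_1\cdots a_m$ (factorisation into atoms). Rule~(\ref{eq:commuteAtoms}) swaps adjacent atoms, at the top level or — combined with~(\ref{eq:archOverEquivalence}) and~(\ref{eq:substituteAtoms}) — inside the contents of any atom at any depth; so rigidity forces $a_1=\cdots=a_m=:b$ and, recursively, the contents of every atom occurring in $A$ to be a power of a single atom. Rules~(\ref{eq:archOverEquivalence}) and~(\ref{eq:substituteAtoms}) together let one rewrite the contents of any atom of $A$; using the remark after Definition~\ref{de:sim} that $\archOver{X}\sim\archOver{Y}\Leftrightarrow X\sim Y$ and that non-trivial equivalences between atoms come only from~(\ref{eq:archOverEquivalence}), rigidity of $A=b^m$ forces $b$ to be the unique atom in its cohort, which happens exactly when $b=\emptyArch$ or $b=\archOver{B}$ with $B$ rigid (equivalently, with $B$ in a singleton cohort): this is the ``Moreover'' clause. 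The subtle rule is~(\ref{eq:binaryRotation}): since~(\ref{eq:substituteAtoms}) substitutes only \emph{atoms}, an instance $a\archOver{bc}\sim\archOver{ab}c$ can be used only (i) when $a\archOver{bc}$ (or $\archOver{ab}c$) is all of $A$, or (ii) via~(\ref{eq:archOverEquivalence}) on an atom $\archOver{a\archOver{bc}}$ occurring in $A$.

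With $A=b^m$ and $b$ unique in its cohort, I would split on $m$. If $m\ge 3$, form (i) cannot occur (it needs at most two top-level atoms) and form (ii) cannot occur non-trivially, since the contents of $b$ is empty or a single-atom power with at least three parts, hence never of the shape $a\archOver{bc}$; so $A=b^k$ with $k\ge 3$ — the first bullet. If $m=2$, writing the second factor $b$ of $A=b\cdot b$ as $\archOver{C}$ (with $C$ the contents of $b$) exhibits form (i) of~(\ref{eq:binaryRotation}) whenever $C$ splits into at most two atoms, and that move is non-trivial; so rigidity forces $C$ to be a rigid single-atom power with at least three parts, i.e. $b=\archOver{b'^{\,k}}$ with $k\ge 3$ and $b'$ unique in its cohort — the second bullet. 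If $m=1$, $A=\archOver{C}$; rigidity forces $C$ rigid, and writing $A=\archOver{C}=\varepsilon\cdot\archOver{C}$ shows form (i) fires non-trivially unless $C$ is empty or has at least three atoms; together with $C$ rigid this gives $A=\emptyArch$ or $A=\archOver{b'^{\,k}}$ with $k\ge 3$ — the third bullet. For the converse I would re-run each of these checks on the three listed configurations and verify that every rule is blocked, using the inductive hypothesis for the contents.

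I expect the main obstacle to be the precise bookkeeping for rule~(\ref{eq:binaryRotation}) — in particular, determining exactly when it yields a \emph{different} arch system. The crucial points are: (\ref{eq:substituteAtoms}) substitutes only atoms, so~(\ref{eq:binaryRotation}) cannot be applied to an interior contiguous block of a factorisation; applied to $c\cdot c$ (two copies of an atom $c$) it \emph{does} change the system exactly when the contents of $c$ split into at most two atoms, which is what forces the bound $k\ge 3$ — rather than $k\ge 2$ — in the first bullet and the restrictions on the contents in the second and third; and one must keep separate, as the footnote warns, ``$b$ is the only atom in its cohort'' from ``the cohort of $b$ is a singleton'' (for instance $\archOver{\emptyArch}$ satisfies the former but not the latter, which is exactly why $(\archOver{\emptyArch})^k$ is rigid for $k\ge 3$). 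Finally, since many instances of~(\ref{eq:binaryRotation}) with one of $a,b,c$ empty collapse to trivial identities, throughout ``a rule fires'' must be read as ``a rule produces a distinct arch system''.
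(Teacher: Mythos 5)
Your overall strategy is the same as the paper's: reduce ``singleton cohort'' to a local check (no single application of rules (\ref{eq:archOverEquivalence})--(\ref{eq:binaryRotation}) produces a distinct system), observe that rule (\ref{eq:commuteAtoms}) forces all top-level atoms equal, rule (\ref{eq:substituteAtoms}) forces that atom to be the only atom in its cohort, and -- the key point, which you identify correctly -- that rule (\ref{eq:binaryRotation}) can only be applied to the whole system or to the entire contents of an atom, which drives the case split $m\geq 3$, $m=2$, $m=1$ matching the three bullets. Deriving the ``Moreover'' clause directly from the remark after Definition~\ref{de:sim} (rather than by the paper's explicit three-case verification) is a legitimate shortcut, since the paper asserts that remark.

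There is, however, one incorrect intermediate assertion in your $m\geq 3$ case: you claim that if $b$ is the only atom in its cohort then the contents of $b$ are ``empty or a single-atom power with at least three parts.'' That is false: by the Moreover clause the contents of $b$ need only lie in a singleton cohort, so they may also be of the form $a^2$ or a single atom. For instance $b=\archOver{\archOver{\emptyArch\emptyArch\emptyArch}}$ is the only atom in its cohort (its contents $\archOver{\emptyArch\emptyArch\emptyArch}$ form a singleton cohort by your third bullet), yet its contents are a single atom; $A=b^3$ is a genuine instance of the first bullet to which your stated reason for blocking rule (\ref{eq:binaryRotation}) inside $b$ does not apply. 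The conclusion survives, and there are two repairs: either redo the shape check for the two remaining forms of singleton-cohort contents (this is exactly the case analysis the paper carries out when verifying the ``Moreover'' statement, where the equivalents $\archOver{a}a$, $a\archOver{a}$, $\archOver{Y}\emptyArch$, $\emptyArch\archOver{Y}$ appear but are not atoms), or, more simply, note that no interior analysis is needed at all: any move performed inside a top-level atom $b$ yields, via rules (\ref{eq:archOverEquivalence}) and (\ref{eq:substituteAtoms}), an atom $b'\sim b$ with $b'\neq b$, contradicting that $b$ is the only atom in its cohort. With that observation, the sufficiency direction in all three of your cases reduces to checking only top-level applications of rules (\ref{eq:commuteAtoms}) and (\ref{eq:binaryRotation}), which is precisely how the paper argues.
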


\begin{proof}
Suppose first that an arch system $A$ is a concatenation of two or more atoms. 
For such arch systems rule (\ref{eq:commuteAtoms}) would yield more than one element in $A$'s cohort unless these atoms were all identical. 
Further, rule (\ref{eq:substituteAtoms}) would do likewise if that atom were not the only atom in its cohort. 

On the other hand, if these conditions are met, and $A$ is a concatenation of at least three atoms 
then rules (\ref{eq:archOverEquivalence}) and (\ref{eq:binaryRotation}) cannot be applied, 
so such $A$ are indeed arch systems whose cohort is a singleton.

If the cohort of $A = a^2$ is a singleton, and $a = \archOver{X}$ then clearly the cohort of $X$ must be a singleton (else rule (\ref{eq:archOverEquivalence}) would apply). 
Furthermore, $X$ must be the concatenation of at least three atoms, or else rule (\ref{eq:binaryRotation}) could be applied in $A$. 
Conversely, if $X$ satisfies these conditions then none of the rules can be applied to yield any other element of $A$'s cohort.

If $A = \archOver{X}$ is an atom that forms a singleton cohort, then its contents $X$ (if not empty) must belong to a singleton cohort 
(else rule (\ref{eq:archOverEquivalence}) would apply). 
$X$ cannot be an atom since $\archOver{\archOver{Y}} \sim \emptyArch{\archOver{Y}}$ (from rule (\ref{eq:binaryRotation}) with $c=\archOver{Y}$ and $a$ and $b$ empty). 
Similarly, $X$ cannot be the concatenation of two atoms, since $\archOver{ab} = a \archOver{b}$ (from rule (\ref{eq:binaryRotation}) with $c$ empty). 
So $X$ must satisfy the first condition. 
Conversely if the contents $X$ of $A$ do satisfy this condition then the cohort of $A$ will be a singleton: 
indeed, the only rules allowing one to find a $\sim$-equivalent of an atom are rule (\ref{eq:archOverEquivalence}) and 
the special cases of rule (\ref{eq:binaryRotation}) -- which do not apply here since $X$ is the concatenation of at least three atoms. 

If an atom in the only atom is its cohort, then obviously its contents are either empty or belong to a singleton cohort. 
Conversely, consider an atom that is either $\emptyArch$ or $\archOver{X}$ where the cohort of $X$ is a singleton. 
Certainly, $\emptyArch$ is the only atom in its cohort (which is indeed a singleton here). 
We claim that for any arch system $X$ whose cohort is a singleton, $\archOver{X}$ is the only atom in its cohort. 
Such $X$ satisfies one of the conditions of Proposition~\ref{pr:characterOfSingletonCohorts}. 
If $X=b^k$ as in the first condition, then none of the rules (\ref{eq:archOverEquivalence}) to (\ref{eq:binaryRotation}) apply to $\archOver{X}$
-- note that here the cohort of $\archOver{X}$ is actually a singleton, from the third condition. 
If $X=a^2$ as in the second condition, then only special cases of rule (\ref{eq:binaryRotation}) apply to $\archOver{X} = \archOver{aa}$, 
producing two $\sim$-equivalent to $\archOver{X}$, namely $\archOver{a}a$ and $a\archOver{a}$.
If $X= \archOver{Y}$ is an atom as in the third condition, then only special cases of rule (\ref{eq:binaryRotation}) apply to $\archOver{X} = \archOver{\archOver{Y}}$
producing two (one if $Y$ is empty) $\sim$-equivalent to $\archOver{X}$, namely $\archOver{Y} \emptyArch$ and $\emptyArch \archOver{Y}$.
In all cases, we observe that $\archOver{X}$ is indeed the only atom in its cohort.
\end{proof}

In order to translate these conditions into recurrences allowing to count singleton cohorts we introduce several auxiliary functions: 
$S_1(n)$ counts the atomic singleton cohorts, 
$S_2(n)$ counts the singleton cohorts of the form $a^2$, 
and $S_{\geq 3}(n)$ counts the singleton cohorts of the form $b^k$ for $k \geq 3$. 
Also $A(n)$ counts the number of cohorts that contain a single atom. 
Then we obtain as recursive conditions:
\begin{align*}
S_1(n) &= S_{\geq 3}(n-1), \\
S_2(n) &= \left\{ \begin{array}{ll} 0 & \mbox{$n$ odd} \\ S_{\geq 3}(n/2 - 1) & \mbox{$n$ even} \end{array} \right. , \\
S_{\geq 3}(n) &= \sum_{k \geq 3, k | n} A(n/k) , \\
A(n) &= S_1(n-1) + S_2(n-1) + S_{\geq 3}(n-1).
\end{align*}
These together with appropriate boundary conditions determine all the functions and hence the total number $S(n)$ of singleton cohorts, $S(n) = S_1(n) + S_2(n) + S_{\geq 3}(n)$. Note that the actual recurrences really just involve $S_{\geq 3}$ and $A$ as follows:
\begin{align*}
S_{\geq 3}(n) &= \sum_{k \geq 3, k | n} A(n/k) , \\
A(n) &= \left\{ \begin{array}{ll} S_{\geq 3}(n-1) + S_{\geq 3}(n-2) + S_{\geq 3}((n-3)/2) & \mbox{$n$ odd} \\ S_{\geq 3}(n-1) + S_{\geq 3}(n-2)  & \mbox{$n$ even} \end{array} \right. .
\end{align*}

It might be possible to derive from the above 
some information on the ``average behaviour'' of $S(n)$, 
the number of singleton cohorts of $n$-arch systems. 
But this would likely involve tricky computations with number theoretic arguments, 
that we leave aside for the moment. 

\subsection{Comparing avoidance classes between cohorts}
\label{subsec:comparingGFofCohorts}

One (maybe the most important) purpose of this subsection is to prove that 
the main cohort is the largest in terms of the generating function associated with $\Av(X)$, for $X$ in this cohort. 
This claim is proved as a consequence of more general statements, that allow the comparison of 
such generating functions associated with various cohorts. 

Let us start by introducing some notation. For any cohort $\C$, and any $A$ and $B$ in $\C$, 
we know from Theorem~\ref{th:mainTheorem} that $\Av(A)$ and $\Av(B)$ have the same generating function. 
We may therefore associate this generating function with $\C$ and, when doing so, we denote it $F_{\C}$. 
For two cohorts \C and \D, with generating functions $F_{\C} = \sum c_n t^n$ and $F_{\D} = \sum d_n t^n$, 
we write $\C \leq \D$ when for all $n$, $c_n \leq d_n$. 
We also write $\C < \D$ when $\C \leq \D$ and there exists $n_0$ such that for all $n\geq n_0$ $c_n < d_n$. 
Finally, for any arch system $A$, we denote by $\C_A$ the cohort containing $A$, that is to say the equivalence class of $A$ for $\sim$. 

Variations on the bijective proofs of cases~(\ref{eq:archOverEquivalence}), (\ref{eq:substituteAtoms}) 
and the specialisation of case~(\ref{eq:binaryRotation}) of Theorem~\ref{th:mainTheorem} 
allow us to provide some recursive rules for the comparison of cohorts $\C_A$. 

\begin{proposition}
\label{pr:archOverComparison}
For any arch systems $A$ and $B$, 
if $\C_A \leq \C_B$ then $\C_{\archOverSubscript{A}} \leq \C_{\archOverSubscript{B}}$,
and 
if $\C_A < \C_B$ then $\C_{\archOverSubscript{A}} < \C_{\archOverSubscript{B}}$.

\end{proposition}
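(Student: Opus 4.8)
The plan is to exhibit, for each $n$, an injection from the arch systems of size $n$ in $\Av(\archOver{A})$ into those of $\Av(\archOver{B})$, and to make it ``strict'' when $\C_A < \C_B$. The starting point is the observation (used in the proof of case~(\ref{eq:archOverEquivalence})) that $\Av(\archOver{A})$ consists precisely of the arch systems that are concatenations $x_1 x_2 \cdots x_m$ of atoms $x_i = \archOver{X_i}$ whose contents $X_i$ avoid $A$; and likewise for $B$. Thus if $c_n$ (resp. $d_n$) denotes the number of size-$n$ elements of $\Av(A)$ (resp. $\Av(B)$) and $\widehat c_n$, $\widehat d_n$ the analogous counts for $\Av(\archOver{A})$, $\Av(\archOver{B})$, then both $\widehat c$ and $\widehat d$ are obtained from $c$ and $d$ by the \emph{same} combinatorial transfer, namely ``sequences of atoms each of which costs one extra arch for its outer arch and whose interior is an $\Av(A)$- (resp.\ $\Av(B)$-) system''. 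Concretely, $\widehat F_A = 1/(1 - t F_A)$ and $\widehat F_B = 1/(1 - t F_B)$ by the computation already recorded in the paper (this is exactly the relation $F_a = 1/(1-tF_A)$ appearing in the analytic proofs and in Observation~\ref{obs:GFforArchOver}).

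First I would establish the weak inequality. Since $F_A = \sum c_n t^n$ and $F_B = \sum d_n t^n$ have nonnegative coefficients with $c_n \le d_n$ for all $n$ by hypothesis, and since the map $G \mapsto 1/(1 - tG)$ is given by $\sum_{k \ge 0} t^k G^k$, which has only nonnegative-integer ``operator coefficients'', it is monotone in the coefficientwise order on power series with nonnegative coefficients: coefficient-by-coefficient, each coefficient of $1/(1-tF_A)$ is a polynomial with nonnegative integer coefficients in the $c_n$, and replacing each $c_n$ by the larger $d_n$ can only increase it. Hence $\widehat c_n \le \widehat d_n$ for all $n$, i.e. $\C_{\archOverSubscript A} \le \C_{\archOverSubscript B}$. (Alternatively, one can phrase this bijectively: build an injection $\Av(\archOver A) \hookrightarrow \Av(\archOver B)$ by fixing, for each $n$, an injection $\Av(A)_{\le n} \hookrightarrow \Av(B)_{\le n}$ of the size-graded pieces and applying it inside each atom, as in the proof of case~(\ref{eq:archOverEquivalence}); monotonicity of the sequence counts is what guarantees such graded injections exist.)

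Next, for the strict statement, suppose $\C_A < \C_B$, so $c_n \le d_n$ for all $n$ and $c_n < d_n$ for all $n \ge n_0$. I want $\widehat c_n < \widehat d_n$ for all sufficiently large $n$. Writing $\widehat F_B - \widehat F_A = \dfrac{1}{1-tF_B} - \dfrac{1}{1-tF_A} = \dfrac{t\,(F_B - F_A)}{(1-tF_A)(1-tF_B)}$, observe that $t(F_B - F_A)$ has nonnegative coefficients, strictly positive from degree $n_0+1$ onward, while $\dfrac{1}{(1-tF_A)(1-tF_B)} = \sum_{k\ge 0}\binom{k+1}{1}\cdots$ has nonnegative coefficients and constant term $1$. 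A product of a power series with nonnegative coefficients that are eventually strictly positive, by a power series with nonnegative coefficients and nonzero constant term, again has eventually strictly positive coefficients (each coefficient of the product of degree $\ge n_0+1$ picks up the ``diagonal'' contribution of a positive coefficient of the first factor times the constant term of the second). Hence $\widehat d_n - \widehat c_n > 0$ for all $n \ge n_0 + 1$, giving $\C_{\archOverSubscript A} < \C_{\archOverSubscript B}$.

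The only genuinely delicate point is the bookkeeping in the last paragraph: one must make sure the strictness really does propagate, i.e. that the ``extra'' arch in $\archOver{\cdot}$ does not merely shift the threshold but never cancels the gap. This is handled cleanly by the factored form above, since the denominator $(1-tF_A)(1-tF_B)$ is invertible as a power series (constant term $1$) and has nonnegative coefficients, so multiplying by it preserves both ``nonnegative everywhere'' and ``positive eventually''. If one prefers a fully bijective argument throughout, the same conclusion follows by refining the injection of case~(\ref{eq:archOverEquivalence}): take a single atom $\archOver{Y}$ with $Y \in \Av(B)_{n_0} \setminus (\text{image of }\Av(A)_{n_0})$ as a ``spare part'' and use it, together with the graded injections on the other atoms, to exhibit strictly more size-$n$ elements of $\Av(\archOver B)$ than of $\Av(\archOver A)$ for $n$ large.
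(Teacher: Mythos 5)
Your argument is correct, but its primary route differs from the paper's. The paper proves this proposition injectively: from $\C_A \leq \C_B$ one gets, for purely cardinality reasons, a size-preserving injection from $\Av(A)$ to $\Av(B)$, and this is propagated through the atom decomposition exactly as in the bijective proof of case~(\ref{eq:archOverEquivalence}) of Theorem~\ref{th:mainTheorem}, with ``bijection'' replaced throughout by ``injection'' (and, for the strict statement, by ``injection that is not surjective in any size from some $n'_0$'', the threshold shifting to $n_0 = n'_0+1$). You instead argue analytically: using the transfer $F_{\archOverSubscript{X}} = 1/(1-tF_X)$, the weak inequality follows from coefficientwise monotonicity of $G \mapsto 1/(1-tG)$, and the strict one from the factorisation $F_{\archOverSubscript{B}} - F_{\archOverSubscript{A}} = t(F_B - F_A)/\bigl((1-tF_A)(1-tF_B)\bigr)$, whose second factor has nonnegative coefficients and constant term $1$; this is complete and makes the shift of the strictness threshold by one entirely transparent. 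What the paper's injective formulation buys is uniformity with the rest of Subsection~\ref{subsec:comparingGFofCohorts}: Propositions~\ref{pr:substitutionComparison} and~\ref{pr:comparisonArchSwitching} use the same ``upgrade the bijection to an injection'' device in situations where no comparably tidy algebraic transfer is available, and the explicit injections are themselves of interest in that development; your analytic route, by contrast, is shorter here but leans on the special fact that the $\archOver{\cdot}$ operation has the clean transfer $G \mapsto 1/(1-tG)$. Your parenthetical bijective variant (graded injections applied inside each atom, plus a spare atom $\archOver{Y}$ with $Y$ outside the image to witness eventual non-surjectivity) is essentially the paper's own proof, so you in fact have both arguments.
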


\begin{proof}
To prove that $\C_{\archOverSubscript{A}} \leq \C_{\archOverSubscript{B}}$ (resp. $\C_{\archOverSubscript{A}} < \C_{\archOverSubscript{B}}$) 
we should compare (term by term) the enumeration sequences of $\Av(\archOver{A})$ and $\Av(\archOver{B})$, proving that the latter is weakly (resp. eventually strictly) larger. 
To do that, it is enough to give a size-preserving injection (resp. size-preserving injection which fails to be surjective in any size from some $n_0$) 
from  $\Av(\archOver{A})$ to $\Av(\archOver{B})$ given one from $\Av(A)$ to $\Av(B)$.
This follows immediately from the same arguments used in the proof of case (\ref{eq:archOverEquivalence}) of Theorem \ref{th:mainTheorem}, 
essentially by replacing ``bijection'' wherever it occurs by ``injection'' (resp. ``injection which is not surjective in any size from some $n'_0$'' -- observe that $n_0 = n'_0 +1$).
\end{proof}

%

\begin{proposition}
\label{pr:substitutionComparison}
For any arch system $A$ and any atom $b$, if $\C_A \leq \C_b$ then $\C_{PAQ} \leq \C_{PbQ}$, 
and unless $A = a$ is an atom such that $a \sim b$, $\C_{PAQ} < \C_{PbQ}$. 
Moreover, if $\C_A < \C_b$ then $\C_{PAQ} < \C_{PbQ}$. 
\end{proposition}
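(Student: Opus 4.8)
The plan is to upgrade the bijective argument for case~(\ref{eq:substituteAtoms}) of Theorem~\ref{th:mainTheorem} from bijections to injections, while keeping careful track of surjectivity. From $\C_A \le \C_b$ we fix a size-preserving injection $\iota \colon \Av(A) \to \Av(b)$, choosing it (when $\C_A < \C_b$) so that it fails to be surjective in every sufficiently large size; from $\iota$ we will manufacture a size-preserving injection $\tau \colon \Av(PAQ) \to \Av(PbQ)$, and then show that $\tau$ is onto in all large sizes precisely in the announced exceptional case, $A = a$ an atom with $a \sim b$.

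The construction of $\tau$ copies the proof of case~(\ref{eq:substituteAtoms}) for as long as possible. For $X \in \Av(PAQ)$: if $X \in \Av(PQ)$ set $X^\tau = X$, which is legitimate since $PbQ$ contains $PQ$ and hence $\Av(PQ) \subseteq \Av(PbQ)$. Otherwise let $P_L$, $Q_R$ be the leftmost copy of $P$ and the rightmost copy of $Q$ in $X$, let the arches straddling the region between them carve it into intervals $I_1, \dots, I_k$, and put $Y = I_1 \cdots I_k$. Two things change relative to the atom case. On the target side, \emph{because $b$ is an atom}, Observation~\ref{ob:greedyInvolvement} shows that an arch system built on the same skeleton (same $P_L$, $Q_R$ and straddling arches) avoids $PbQ$ if and only if the concatenation of its intervals avoids $b$. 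On the source side, \emph{because $A$ need not be an atom}, the right condition is that $Y$ as a whole avoids $A$ --- not that the individual $I_j$ do --- since an occurrence of $A$ between $P_L$ and $Q_R$ cannot meet a straddling arch and hence lies inside $Y$. Accordingly we keep $P_L$, $Q_R$ and the straddling arches fixed, apply $\iota$ to $Y$, and cut $\iota(Y) \in \Av(b)$ back into $k$ intervals $I_1', \dots, I_k'$; the output $X^\tau$ then avoids $PbQ$. To make this reversible we must also encode the position of the $k$-fold cut of $Y$, which is delicate because $\iota(Y)$ need not have the same atom decomposition as $Y$. The way round this is to arrange $\iota$ to be compatible with the atom structure --- it suffices that $\iota$ never decreases the number of atoms --- which is possible exactly because $b$ is an atom: for an atom $b$ the atom-refined enumeration of $\Av(b)$ factors through a product over atoms, and one checks that it dominates the corresponding refined enumeration of $\Av(A)$ whenever $\C_A \le \C_b$, so that a Hall-type matching supplies such an $\iota$. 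Granting this, the cut of $Y$ injects into the set of compositions available for cutting $\iota(Y)$, $X^\tau$ is well defined, size-preserving and invertible on its image, and $\tau$ is the desired injection.

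For the strictness clause, one observes that $\tau$ is a bijection in large sizes only if its interval-level part is, which forces $\iota$ to be a size-preserving bijection that moreover preserves the atom decomposition; via Observation~\ref{obs:GFforArchOver} and the genuine bijection of case~(\ref{eq:substituteAtoms}) of Theorem~\ref{th:mainTheorem} this happens only when $A = a$ is an atom with $a \sim b$. In every other case one produces, for each large $n$, an arch system of size $n$ in $\Av(PbQ)$ missed by $\tau$ --- for instance one whose region between $P_L$ and $Q_R$ is a single interval carrying an element of $\Av(b)$ outside the image of $\iota$, or (when $A$ is not an atom) one whose single interval carries a suitable single atom avoiding $b$ that admits no $PAQ$-avoiding preimage. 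The last assertion, that $\C_A < \C_b$ implies $\C_{PAQ} < \C_{PbQ}$, is then immediate since there $\iota$ is eventually non-surjective in every size.

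The main obstacle is the redistribution step above: transporting the $k$-fold cut of $Y$ onto $\iota(Y)$ injectively and reversibly when their atom decompositions differ, together with the enumeration lemma it rests on --- that for an atom $b$ the atom-refined counting of $\Av(b)$ dominates that of $\Av(A)$ as soon as $\C_A \le \C_b$. A secondary difficulty is showing that the ``unless'' clause is sharp, which must be interwoven with Observation~\ref{obs:GFforArchOver}, Proposition~\ref{pr:archOverComparison} and case~(\ref{eq:substituteAtoms}) of Theorem~\ref{th:mainTheorem}.
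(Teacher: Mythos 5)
Your high-level plan --- upgrade the bijection of case~(\ref{eq:substituteAtoms}) of Theorem~\ref{th:mainTheorem} to an injection built from an injection $\iota\colon\Av(A)\to\Av(b)$, then track surjectivity --- is the paper's plan, but at the crucial step you take a detour that introduces an unproved lemma and is not needed. You apply $\iota$ to the whole concatenation $Y=I_1\cdots I_k$ and must then transport the $k$-fold cut onto $\iota(Y)$; to do this you posit an $\iota$ that never decreases the number of atoms, justified only by the assertion (``one checks'') that for an atom $b$ the enumeration of $\Av(b)$ refined by number of atoms dominates, in the tail-sum sense, that of $\Av(A)$ whenever $\C_A\le\C_b$. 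That refined domination is strictly stronger than the hypothesis (for one atom it \emph{is} the hypothesis; for larger atom counts it is new) and is established nowhere, so as written your construction rests on a hole. The detour is also unnecessary: although ``$Y$ avoids $A$'' is the exact membership condition for $\Av(PAQ)\setminus\Av(PQ)$, to build an \emph{injection} you only need its weaker consequence that each interval $I_j$ avoids $A$ (an occurrence of $A$ inside a single $I_j$ would lie between $P_L$ and $Q_R$ and create $PAQ$). The paper applies $\iota$ to each interval separately; the skeleton (prefix through $P_L$, suffix from $Q_R$, straddling arches) is unchanged and recoverable, each image interval avoids $b$, and since $b$ is an atom the result avoids $PbQ$, so one gets a size-preserving injection with no cut-redistribution problem at all. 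The information discarded in passing from ``$Y$ avoids $A$'' to ``each $I_j$ avoids $A$'' is precisely why the map is only an injection, and why it can be a bijection only when $A$ is an atom and $\iota$ is a bijection --- which is the content of the ``unless'' clause.

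Your treatment of that clause is also partly wrong. When $A$ is not an atom but $\Av(A)$ and $\Av(b)$ are equinumerous (e.g.\ $A=\emptyArch\emptyArch$ and $b=\archOver{\emptyArch}$, which are $\sim$-equivalent), $\iota$ may be a bijection, and then an element of $\Av(PbQ)$ whose region between $P_L$ and $Q_R$ is a single interval always has a $PAQ$-avoiding preimage (pull the interval back through $\iota$; the region then avoids $A$), so your proposed witness ``a single interval carrying an atom avoiding $b$ with no $PAQ$-avoiding preimage'' does not exist. The missed elements must instead use at least two intervals, exploiting that a concatenation whose parts all avoid $A$ can still contain $A$ --- possible exactly when $A$ is not an atom, which is the paper's argument; the $\C_A<\C_b$ case is the easy one, where an element of $\Av(b)$ outside the image of $\iota$ is placed in a single interval, as you say. (A minor caveat, shared with the paper's own wording: when $P$ and $Q$ are both empty no straddling arches can occur, the map reduces to $\iota$, and the example $A=\emptyArch\emptyArch$, $b=\archOver{\emptyArch}$ shows the strict inequality can fail for a non-atom $A$; with $P$ or $Q$ nonempty the two-interval witnesses are available.)
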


\begin{proof}
To prove $\C_{PAQ} \leq \C_{PbQ}$, we describe a size-preserving injection from $\Av(PAQ)$ to $\Av(PbQ)$, based on one from $\Av(A)$ to $\Av(b)$.

With the same decomposition used in the proof of case (\ref{eq:substituteAtoms}) of Theorem \ref{th:mainTheorem}, 
we see that, given an injection from $\Av(A)$ to $\Av(b)$, an injection from $\Av(PAQ)$ to $\Av(PbQ)$ can be constructed. 
This uses the fact that if a concatenation $I_1 I_2 \ldots I_k$ of arch systems avoids $A$, then each arch system $I_i$ must avoid $A$. 

If $\C_A < \C_b$, this injection cannot possibly be a bijection (except for the first few sizes $n \leq$ some $n_0$). 
Indeed, it is easy to construct elements of any size $n + |P| + |Q|$ of $\Av(PbQ)$ that do not lie in its image 
from elements of $\Av(b)$ of size $n$ that do not lie in the image of the original injection. 
In fact, for this injection to be a bijection, we need two conditions. 
The first one is that a concatenation of arch systems should avoid $A$ if and only if each arch system in this sequence avoids $A$: this happens exactly when $A$ is an atom. 
The second condition is that the injection from $\Av(A)$ to $\Av(b)$ needs to be a bijection, i.e.~that $A \sim b$. 
\end{proof} 

Propositions~\ref{pr:archOverComparison} and \ref{pr:substitutionComparison} 
are enough to prove that the main cohorts $\bigclass_n = \C_{N_n}$ are the largest 
in the sense that their generating functions $F_{\bigclass_n}$ eventually dominate the generating functions of any other cohort of arch systems of size $n$.
Recall that $N_n$ is the arch system consisting of $n$ nested arches. 

\begin{proposition}
\label{pr:easiestToAvoid}
For every arch system $A$ of size $n$, either $A$ is in the cohort of $N_n$ or $\C_{A} < \C_{N_n}$. 
\end{proposition}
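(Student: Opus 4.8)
The plan is to induct on the size $n$, using the fact (noted in Observation~\ref{obs:GFforArchOver} and Proposition~\ref{pr:archOverComparison}) that avoidance behaviour passes through the $\archOver{\cdot}$ operation. The base cases $n \leq 2$ are trivial since every cohort of size at most $2$ is the main cohort. For the inductive step, fix an arch system $A$ of size $n$ not in $\bigclass_n$, and factor it into atoms $A = a_1 a_2 \cdots a_m$ with $a_1 = \archOver{A_1}$. I would split into two cases according to whether $m \geq 2$ or $m = 1$.

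First I would handle the atomic case $m = 1$, so $A = \archOver{A_1}$ with $A_1$ of size $n-1$. If $A_1 \notin \bigclass_{n-1}$, then by induction $\C_{A_1} < \C_{N_{n-1}}$, and Proposition~\ref{pr:archOverComparison} immediately gives $\C_A = \C_{\archOverSubscript{A_1}} < \C_{\archOverSubscript{N_{n-1}}} = \C_{N_n}$. If instead $A_1 \in \bigclass_{n-1}$, then $\C_{\archOverSubscript{A_1}} = \C_{N_n} = \bigclass_n$ (since $\archOver{N_{n-1}} = N_n$ and $A_1 \sim N_{n-1}$ lifts through the arch by rule~(\ref{eq:archOverEquivalence})), contradicting $A \notin \bigclass_n$; so this subcase does not arise. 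Thus the atomic case reduces cleanly to the concatenation case.

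Now the case $m \geq 2$. Here I would use the comparison rule of Proposition~\ref{pr:substitutionComparison}, which lets me replace an atom (or more generally a factor) in a word by a larger-cohort atom and strictly increase the cohort, \emph{unless} the factor was already an atom equivalent to the replacement. The idea is to argue that $A$ can be transformed into $N_n$ by a sequence of such steps, at least one of which is strict. Concretely: each atom $a_i = \archOver{A_i}$ has $\C_{A_i}$, and by induction $\C_{A_i} \leq \C_{N_{|A_i|}}$, hence by Proposition~\ref{pr:archOverComparison} $\C_{a_i} \leq \C_{N_{|a_i|}}$, i.e. each atom of $A$ is dominated (in cohort) by a nest of the same size. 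Applying Proposition~\ref{pr:substitutionComparison} repeatedly (substituting $a_i \leadsto N_{|a_i|}$, with $P, Q$ the surrounding factors) gives $\C_A \leq \C_{N_{|a_1|} N_{|a_2|} \cdots N_{|a_m|}}$. Then I would observe that a concatenation of two or more nests $N_{k_1} N_{k_2} \cdots N_{k_m}$ (with $m \geq 2$) is itself in the main cohort $\bigclass_n$ — its forest is a union of chains, hence a unary-binary forest — and so $\C_{N_{k_1} \cdots N_{k_m}} = \C_{N_n}$. Therefore $\C_A \leq \C_{N_n}$ unconditionally.

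It remains to get strictness when $A \notin \bigclass_n$. The point is that $A \notin \bigclass_n$ forces at least one of the above substitution steps to be \emph{genuinely} non-trivial in the sense required by Proposition~\ref{pr:substitutionComparison}: either some atom $a_i$ is not equivalent to $N_{|a_i|}$ (so that replacing it is strict by the "unless" clause), or all atoms $a_i$ are equivalent to nests, i.e. $a_i \in \bigclass_{|a_i|}$ for every $i$ — but then $A = a_1 \cdots a_m$ is a concatenation of main-cohort atoms, which is exactly the closure property characterising $\bigclass_n$, contradicting $A \notin \bigclass_n$. So some substitution step is strict, and combining it with the weak inequalities from the other steps (using that $\leq$ composed with one $<$ yields $<$, which follows from Propositions~\ref{pr:archOverComparison} and~\ref{pr:substitutionComparison} being compatible with transitivity of $<$ over $\leq$) gives $\C_A < \C_{N_n}$, completing the induction.

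\textbf{Main obstacle.} The delicate point is the bookkeeping in the $m \geq 2$ case: I am chaining together several applications of Proposition~\ref{pr:substitutionComparison}, and I must be careful that the intermediate arch systems are well-defined (the surrounding factors $P$, $Q$ change at each step) and that a single strict step survives the composition with weak steps. I also need the small lemma that a concatenation of two or more nests lies in the main cohort — this should be immediate from the forest description of $\bigclass_n$ in Subsection~\ref{subsec:structureCohort}, but it is the one ingredient not literally stated as a numbered result. Finally, one must double-check that "$A$ is a concatenation of atoms each in its respective main cohort" genuinely characterises membership in $\bigclass_n$; this is exactly the building-block description given just before Proposition~\ref{pr:MotzkinManyInMainCohort}, so it is available.
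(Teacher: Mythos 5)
Your base case and the atomic case follow the paper's route, and your atom-by-atom substitutions do give the weak bound $\C_A \leq \C_{N_{|a_1|}\cdots N_{|a_m|}}$. The genuine gap lies exactly in the two facts you flagged as ``to be double-checked'': both are false as soon as $m \geq 3$. The building-block description of the main cohort given before Proposition~\ref{pr:MotzkinManyInMainCohort} only allows concatenating \emph{two} main-cohort atoms (this is what makes $M = t + tM + tM^2$ Motzkin), so a concatenation of three or more nests, or of three or more main-cohort atoms, need not lie in $\bigclass_n$. For instance $\emptyArch\emptyArch\emptyArch$ is a concatenation of main-cohort atoms, yet it forms a singleton cohort by Proposition~\ref{pr:characterOfSingletonCohorts}; in permutation language its class is $\Av(231,123)$, counted by $\binom{n}{2}+1$, while $N_3$ gives $\Av(231,321)$, counted by $2^{n-1}$, so the two generating functions differ from size $4$ on. Consequently your strictness dichotomy fails: for $A$ a concatenation of $m\geq 3$ atoms all of which are nest-equivalent (e.g.\ $A = \emptyArch^n$, $n\geq 3$) none of your substitution steps is strict, and your concluding identity $\C_{N_{k_1}\cdots N_{k_m}} = \C_{N_n}$ is false for $m \geq 3$; replacing it by the needed strict inequality $\C_{N_{k_1}\cdots N_{k_m}} < \C_{N_n}$ would be invoking an instance of the very proposition being proved, so the argument becomes circular precisely where it matters.

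The missing idea is to use the strictness clause of Proposition~\ref{pr:substitutionComparison} with a \emph{non-atomic} replaced factor. The paper writes $A = Xa$ with $a$ the last atom and $X$ the (possibly non-atomic) prefix, and substitutes the whole of $X$ by the single atom $N_{|X|}$: induction gives $\C_X \leq \C_{N_{|X|}}$, hence $\C_{Xa} \leq \C_{N_{|X|}a}$, and this inequality is strict unless $X$ is an atom equivalent to $N_{|X|}$ --- in particular it is automatically strict whenever $X$ is a concatenation of two or more atoms, which is exactly the case your atom-by-atom scheme cannot reach. The proof then finishes with $\C_{N_{|X|}a} \leq \C_{N_{|X|}N_{n-|X|}} = \C_{N_n}$, which only ever uses a concatenation of \emph{two} nests, where $N_jN_{n-j} \sim N_n$ does hold. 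If you want to keep the structure of your argument, at some stage you must perform a substitution whose replaced block is all but one of the atoms (or some other non-atomic block), rather than a single atom at a time.
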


\begin{proof}
The proof is by induction. 
The base case ($n=1$) is clear. 
So assume that $n \geq 2$ and that the statement holds for all $n' < n$. 
Consider an arch system $A$ of size $n$. Either $A = \archOver{X}$ or $A=Xa$ where $a$ is an atom and $X$ a non empty arch system. 

In the first case, by induction we know that exactly one of the following holds:
\begin{itemize}
 \item $X$ is in the cohort of $N_{n-1}$; and then $A$ is in the cohort of $N_n$ by rule (\ref{eq:archOverEquivalence}).
 \item $\C_{X} < \C_{N_{n-1}}$; but then Proposition~\ref{pr:archOverComparison} ensures that $\C_{A} < \C_{N_n}$.
\end{itemize}

In the second case, denoting the size of $X$ by $j$, we know that either $X$ is in the cohort of $N_j$ or $\C_{X} < \C_{N_j}$. 

Assume first that $X \sim N_j$.  If $X$ is an atom, then $Xa \sim N_j a$ by rule (\ref{eq:substituteAtoms}). 
Now either $a \sim N_{n-j}$, in which case $N_j a \sim N_j N_{n-j} \sim N_n$ so that $A = Xa$ is in the cohort of $N_n$; 
or $\C_{a} < \C_{N_{n-j}}$, and Proposition~\ref{pr:substitutionComparison} ensures that 
$\C_A = \C_{Xa} < \C_{XN_{n-j}} \leq \C_{N_j N_{n-j}}$ (using Proposition~\ref{pr:substitutionComparison} again, since $\C_{X} \leq \C_{N_j}$ by induction). 
We conclude using $\C_{N_j N_{n-j}} = \C_{N_n}$. 

If $X$ is not an atom, we deduce from $X \sim N_j$ that $\C_{X} \leq \C_{N_j}$ 
and Proposition~\ref{pr:substitutionComparison} (applied twice) and induction ensure that $\C_{Xa} < \C_{N_j a} \leq \C_{N_j N_{n-j}} = \C_{N_n}$. 

The last case is $\C_{X} < \C_{N_j}$, in which case Proposition~\ref{pr:substitutionComparison} gives $\C_{Xa} < \C_{N_ja} \leq \C_{N_n}$ (as before). 
\end{proof}

Finally, the bijective proof of the specialisation of case~(\ref{eq:binaryRotation}) of Theorem~\ref{th:mainTheorem} 
can also be adapted to the comparison of cohorts. 

\begin{proposition}
\label{pr:comparisonArchSwitching}
For any arch system $A$, and any arch system $b$ which is an atom or empty, $\C_{A\archOverSubscript{b}} \leq \C_{\archOverSubscript{bA}}$. 
Moreover, unless $A$ is an atom, $\C_{A\archOverSubscript{b}} < \C_{\archOverSubscript{bA}}$.
\end{proposition}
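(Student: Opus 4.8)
The plan is to adapt the bijective proof of the specialisation $\Av(a\archOver{b})\we\Av(\archOver{ba})$ of case~(\ref{eq:binaryRotation}), replacing the atom $a$ there by the arbitrary arch system $A$, so as to obtain a size-preserving injection $\tau\colon\Av(A\archOver{b})\to\Av(\archOver{bA})$; this gives $\C_{A\archOverSubscript{b}}\le\C_{\archOverSubscript{bA}}$ immediately. Two cases are easy: if $A$ is empty then $A\archOver{b}$ and $\archOver{bA}$ are literally equal, and if $b=\emptyArch$ then $A\archOver{b}=A\emptyArch$ and $\archOver{bA}=\archOver{A}$, and one uses the map sending a non-empty $X=A_1\archOver{M}$ (its last atom being $\archOver{M}$, so $A_1\in\Av(A)$ and $M\in\Av(A\emptyArch)$) to $\archOver{A_1}\,M^\tau$, reversed through the first atom of an element of $\Av(\archOver{A})$. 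So I would take $b$ to be a non-empty atom and build $\tau$ by recursion on the size of $X$, letting it be the identity on arch systems avoiding $b$.

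For $X\in\Av(A\archOver{b})$ involving $b$ I would extract, exactly as in the specialisation, from the rightmost occurrence of $b$ an atom $\archOver{C}$ of $X$ (with $C$ involving the contents of $b$ but $C\in\Av(b)$), the nest $\gamma_1\subset\dots\subset\gamma_p$ of arches over $\archOver{C}$, the interval $M$ immediately left of $\archOver{C}$, the intervals $A_1,\dots,A_p$ to the left of $\gamma_1,\dots,\gamma_p$, and the intervals $B_0,\dots,B_p$ to their right; one still has $B_j\in\Av(b)$, $M\in\Av(A\archOver{b})$ and, by the same argument (which nowhere uses that $A$ is an atom), $A_i\in\Av(A)$. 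The one genuinely new point is that $X^\tau$ must \emph{not} be built as the plain mirror image used in the specialisation (which reverses the left-to-right order of the $A_i$'s): instead one keeps the $A_i$'s -- and likewise the $B_j$'s -- in exactly the order in which they occur in $X$, which by the freedom noted after the bijective proof of the specialisation is an admissible variant of $\tau$. Concretely,
\[
X^\tau \;=\; B_0\,\archOver{B_1\,\archOver{B_2\,\archOver{\cdots\archOver{B_p\,\archOver{C}\,A_p}\cdots}\,A_2}\,A_1}\,M^\tau ,
\]
so that, read left to right, $X^\tau$ is $B_0B_1\cdots B_p$, then $\archOver{C}$, then $A_pA_{p-1}\cdots A_1$, then $M^\tau$ -- the $A_i$'s in their $X$-order.

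The main obstacle is to check that $X^\tau$ avoids $\archOver{bA}$. An arch of $X^\tau$ whose contents lie inside some $B_j$, some $A_i$, inside $C$, or inside $M^\tau$ has contents avoiding $b$, $A$, $b$, respectively $\archOver{bA}$ (the last by induction), and hence begins no occurrence of $\archOver{bA}$, so it suffices to rule out an occurrence of $bA$ inside the contents of one of the $p$ new wrapping arches. Such an occurrence would be an occurrence of $b$ -- whose outermost arch, not being inside any $B_j$ or $C$, must be $\archOver{C}$'s arch, a deeper wrapping arch, or an arch inside some $A_k$ -- followed by an occurrence of $A$ lying entirely to its right; reading that $A$ off the left-to-right structure of $X^\tau$ realises it as a substructure of a consecutive block $A_kA_{k-1}\cdots A_j$ (no $B_\ell$ can interpose, all of them preceding $\archOver{C}$ and hence every $A_i$ in $X^\tau$). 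But in $X$ these same intervals occur in exactly this left-to-right order, each immediately followed by one of the arches $\gamma_\bullet$, which is nested over $\archOver{C}$, hence over a copy of $b$; so $A$ followed by $\gamma_j$ (with $b$ inside) would be an occurrence of $A\archOver{b}$ in $X$, a contradiction -- and it is here that retaining the $X$-order of the $A_i$'s is essential. Injectivity of $\tau$ then follows by reversing the construction via the leftmost-$b$ decomposition of an element of $\Av(\archOver{bA})$.

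Finally, strictness. If $A$ is an atom then $A\in\Av(P_1\cdots P_k)$ iff $A\in\Av(P_i)$ for each $i$, so the order of the $A_i$'s and $B_j$'s is immaterial, $\tau$ is a bijection, and in fact $A\archOver{b}\sim\archOver{bA}$. If $A$ is not an atom, fix a factorisation $A=A'A''$ into non-empty arch systems; the plan is then to exhibit, for every sufficiently large $n$, an element $Y\in\Av(\archOver{bA})$ of size $n$ whose canonical (leftmost-$b$) decomposition has two of its intervals $A_i$ with the \emph{left} one involving $A''$ and the \emph{right} one involving $A'$, so that reversing the construction places $A'$ before $A''$ and produces an occurrence of $A$ immediately followed by $\archOver{b}$ in the pre-image; the pre-image therefore does not avoid $A\archOver{b}$, and $Y$ escapes the image of $\tau$. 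Making this explicit for all large $n$ while keeping $Y$ inside $\Av(\archOver{bA})$ is the only fiddly step, and it is of the same elementary character as the constructions in the proofs of Propositions~\ref{pr:archOverComparison} and~\ref{pr:substitutionComparison}. This yields $\C_{A\archOverSubscript{b}}<\C_{\archOverSubscript{bA}}$ when $A$ is not an atom.
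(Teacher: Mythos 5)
Your construction of the injection $\Av(A\archOver{b})\to\Av(\archOver{bA})$ is sound and is essentially the paper's argument: same rightmost-$b$ decomposition, same block-by-block transfer with $M$ handled recursively, and your insistence on keeping the $A_i$'s in their $X$-order (rather than mirroring them as in the specialisation) is exactly the point that makes the image avoid $\archOver{bA}$, since only the concatenation $A_pA_{p-1}\cdots A_1$ read in that order is known to avoid $A$ when $A$ is not an atom. So the inequality $\C_{A\archOverSubscript{b}}\leq \C_{\archOverSubscript{bA}}$ is established correctly.

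The strictness part, however, has a genuine gap. Your witness is supposed to be a $Y\in\Av(\archOver{bA})$ whose right intervals contain $A''$ to the left of $A'$, ``so that reversing the construction places $A'$ before $A''$''. But your forward map was deliberately designed to \emph{preserve} the left-to-right order of the $A$-intervals, so its inverse preserves it too: the candidate pre-image of such a $Y$ again has $A''$ before $A'$, and no occurrence of $A=A'A''$, hence no occurrence of $A\archOver{b}$, is created. Worse, no witness can come from the $A$-intervals alone: if the right intervals of $Y$, read left to right, contained $A'$ followed by $A''$, then the $b$ inside $\archOver{C}$ together with this $A$ under the outermost wrapping arch would already give $\archOver{bA}$ in $Y$, contradicting $Y\in\Av(\archOver{bA})$; so for every admissible $Y$ the concatenation of its right intervals avoids $A$, and the pre-image inherits this. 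The true source of non-surjectivity is the condition that couples the $A_i$'s with $M$: an element of $\Av(A\archOver{b})$ must have $A_p\cdots A_1M$ avoiding $A\archOver{b}$ (an occurrence of $A$ may straddle the $A_i$'s and $M$, with the $\archOver{b}$ inside $M$), whereas in $\Av(\archOver{bA})$ the block $M'$ is unconstrained relative to the $A_i$'s. This is what the paper exploits: writing $A=A'A''$, take for instance $p=1$, $A_1=A'$ and $M=A''\archOver{b}$ (padded to reach every sufficiently large size); then $M$ avoids $A\archOver{b}$, so $Y=B_0\archOver{B_1\archOver{C}A'}M^{\tau}$ lies in $\Av(\archOver{bA})$, but its unique candidate pre-image contains $A'\cdot A''\archOver{b}=A\archOver{b}$ and so $Y$ escapes the image. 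Your argument needs to be repaired along these lines; as written it does not yield $\C_{A\archOverSubscript{b}}<\C_{\archOverSubscript{bA}}$.
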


\begin{proof}
Let us assume that $A$ is not empty, otherwise the statement is clear. 
Again, we use the same decomposition as in the proof of the specialisation of case (\ref{eq:binaryRotation}) of Theorem \ref{th:mainTheorem} 
to see that an injection from $\Av(A\archOver{b})$ to $\Av(\archOver{bA})$ can be constructed. 

More precisely, the arch systems of $\Av(A\archOver{b})$ either avoid $b$ or are of the form 
\[
A_p \archOver{A_{p-1} \archOver{\ldots \archOver{A_1 \archOver{M \archOver{C} B_0} B_1} \ldots} B_{p-1}} B_p \quad \text{(resp. } A_1 \archOver{M} \text{ if } b \text{ is empty)}
\]
where $C$ contains the contents of $b$ but avoids $b$, 
the concatenation of arch systems $A_p \ldots A_1$ avoids $A$, 
every $B_i$ avoids $b$, 
and the concatenation of arch systems $A_p \ldots A_1 M$ avoids $A\archOver{b}$. 
This last condition implies that $M$ avoids $A\archOver{b}$, but is more restrictive in general. 
It is equivalent exactly when $A$ is an atom (given that $A_p \ldots A_1$ avoids $A$).

On the other hand, the arch systems of $\Av(\archOver{bA})$ either avoid $b$ or are of the form
\[
B_0 \archOver{B_p \archOver{B_{p-1} \archOver{\ldots \archOver{B_{1} \archOver{C} A_1} \ldots} A_{p-1}} A_p} M' \quad \text{(resp. } \archOver{A_1} M' \text{ if } b \text{ is empty)}
\]
where $C$ contains the contents of $b$ but avoids $b$, 
the concatenation of arch systems $A_p \ldots A_1$ avoids $A$, 
every $B_i$ avoids $b$, and $M'$ avoids $\archOver{bA}$ (without further restriction on $M'$). 

So ``mapping the blocks'' recursively as in the proof of the specialisation of case (\ref{eq:binaryRotation}) of Theorem~\ref{th:mainTheorem} 
we get a size-preserving injection from $\Av(A\archOver{b})$ to $\Av(\archOver{bA})$. 
If $A$ is not an atom, we claim that starting at some size $n_0$, this injection is not surjective. 
Indeed, there exist arch systems $M$ of all sufficiently large sizes 
such that $M$ avoids $A\archOver{b}$ but $A_p \ldots A_1 M$ contains $A\archOver{b}$ for some $A_i$ such that $A_p \ldots A_1$ avoids $A$. 
\end{proof}

\section{Conclusions and open problems}

Several questions are left open in this work. 
An important one is certainly to provide a completely bijective proof of our main result (Theorem~\ref{th:mainTheorem}), 
that is: proving case~\ref{eq:binaryRotation} of this theorem bijectively. Even a sensible combinatorial explanation of the rather tidy expression  for $F_{a \archOverSubscript{bc}}$ in terms of $F_a$, $F_b$ and $F_c$ would represent progress in this direction.
Another problem is to prove that the main cohort is the largest also in terms of number of elements it contains. 

But the most intriguing problem is certainly to prove a converse statement to our main theorem: 
that not only does $\sim$ refine Wilf-equivalence but also coincides with it. 
This is stated as Conjecture~\ref{co:cohortDeterminesGF} at the beginning of our paper, 
and we offer a stronger version of this conjecture, by way of conclusion. 

\begin{conjecture}
For any two arch systems $A$ and $B$, both with $n$ arches, 
either $A$ and $B$ are in the same cohort (i.e. $A \sim B$), or the enumeration sequences of $\Av(A)$ and $\Av(B)$ differ at the latest at size $2n-2$.
\end{conjecture}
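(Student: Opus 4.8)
First I would record that the two enumeration sequences agree through size $n$ for free: an arch system with fewer than $n$ arches cannot contain an $n$-arch system, so $[t^k]F_A=\mathrm{Cat}_k$ for $k<n$, and the only $n$-arch system containing $A$ is $A$ itself, so $[t^n]F_A=\mathrm{Cat}_n-1$. Writing $G_A:=C(t)-F_A$, the generating function counting by size the arch systems containing $A$, this reads $[t^k]G_A=0$ for $k<n$ and $[t^n]G_A=1$. Hence the statement is vacuous for $n\le 2$ (one cohort per size), and for $n\ge 3$ it asserts exactly that the $n-2$ numbers $[t^{n+1}]G_A,\dots,[t^{2n-2}]G_A$ determine the cohort of $A$ among all arch systems of size $n$. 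Note that this statement \emph{implies} Conjecture~\ref{co:cohortDeterminesGF} (if $F_A=F_B$ the sequences certainly agree through size $2n-2$), so any proof must contain a proof of that conjecture.

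The main line of attack would be an induction on $n$ that reconstructs the cohort of $A$ from the window $\bigl([t^{n+1}]G_A,\dots,[t^{2n-2}]G_A\bigr)$, guided by the description of cohorts via the variety $\A$ of Subsection~\ref{subsec:structureCohort}. When $A=\archOver{A'}$ is an atom I would use $F_A=1/(1-tF_{A'})$: since $F_A$ and $F_{A'}$ determine each other, one strips the outer arch and applies the inductive hypothesis at size $n-1$, which needs only the window up to degree $2(n-1)-2=2n-4$, comfortably inside what we are given. When $A=a_1\cdots a_m$ is a proper concatenation I would read the top-level atom data off the bottom of $G_A$: $[t^{n+1}]G_A$ counts the covers of $A$ in the containment order, and the next coefficients record the ways of enlarging the factors $a_i$ and their contents. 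The aim is to isolate cohort-invariant quantities --- say, the multiset of cohorts of the $a_i$ together with the skeleton data of Subsection~\ref{subsec:structureCohort} --- from this bounded window alone, and then recurse into the smaller contents of the $a_i$; Proposition~\ref{pr:classEnumeration} supplies the exact relations between $G_A$ and the $F_{a_i}$, $F_{a_1\cdots a_k}$, $F_{A_1}$ needed to make this bookkeeping precise.

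Because the bound $2n-2$ is sharp, the reconstruction cannot waste a single coefficient. For instance at $n=4$ the patterns $\emptyArch\emptyArch\archOver{\emptyArch}$ and $\archOver{\emptyArch\emptyArch\emptyArch}$ belong to different cohorts, yet $\Av(\emptyArch\emptyArch\archOver{\emptyArch})$ and $\Av(\archOver{\emptyArch\emptyArch\emptyArch})$ have enumeration sequences that coincide through size $5=2n-3$ (both begin $1,1,2,5,13,33$) and first differ at size $6=2n-2$, where they give $81$ and $82$; analogous coincidences occur between other singleton cohorts and their neighbours. So every degree from $n+1$ to $2n-2$ must enter the proof, and the inductive step has to carry exact degree estimates.

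I expect the reconstruction step of the second paragraph to be the real obstacle: it subsumes Conjecture~\ref{co:cohortDeterminesGF}, the central open problem of the paper, verified only by computer through size $15$, so there is no routine route. A realistic intermediate target is the weaker analytic fact that $F_A$ is determined, among the finitely many rational functions $\{F_X:|X|=n\}$, by its coefficients up to degree $2n-2$ --- which one might approach by bounding the degrees of the numerator and denominator of the reduced form of $F_A$ and combining this with the congruence $F_X\equiv C(t)\pmod{t^n}$ --- together with carrying out the full cohort reconstruction in restricted cases, such as atomic cohorts, where the peeling $F_{\archOver{A'}}=1/(1-tF_{A'})$ iterates cleanly, or cohorts whose skeleton has bounded depth.
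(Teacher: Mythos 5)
This statement is not a theorem of the paper: it is the strengthened form of Conjecture~\ref{co:cohortDeterminesGF} offered in the concluding section, and the paper gives no proof of it — only computational verification through size $15$ and the families $A_n=\archOver{C_n\emptyArch\emptyArch}$, $B_n=\emptyArch\emptyArch\archOver{C_n}$ (your $n=4$ example is exactly the paper's, with the roles of the two patterns as in Proposition~\ref{pr:comparisonArchSwitching}) showing that the bound $2n-2$ cannot be lowered. Your preliminary observations are sound: the sequences agree automatically through size $n$, so modulo Theorem~\ref{th:mainTheorem} the conjecture says the window $[t^{n+1}]G_A,\dots,[t^{2n-2}]G_A$ separates cohorts, and the statement indeed subsumes Conjecture~\ref{co:cohortDeterminesGF}. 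The atomic peeling via $F_{\archOver{A'}}=1/(1-tF_{A'})$ (Observation~\ref{obs:GFforArchOver}) is also a correct and genuinely useful reduction.

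The gap is the entire second paragraph: the claim that the cohort-invariant data of a proper concatenation $a_1\cdots a_m$ (the skeleton of Subsection~\ref{subsec:structureCohort} together with the multiset of cohorts of the large atoms) can be isolated from the bounded coefficient window via Proposition~\ref{pr:classEnumeration} is asserted, not argued, and no mechanism is offered for why $n-2$ coefficients suffice — note that the number of cohorts of size $n$ grows like $\gamma^n$ with $\gamma\approx 2.4975$ (Theorem~\ref{THM:enumeration_of_cohorts}), so any injectivity argument for such a short window must exploit fine structure of the rational functions $F_A$, and nothing in your sketch (including the proposed degree bounds on numerator and denominator combined with $F_X\equiv C(t)\pmod{t^n}$) is developed far enough to rule out two inequivalent patterns whose generating functions agree to order $2n-2$. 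As you acknowledge yourself, this reconstruction step contains the central open problem of the paper, so the proposal is a research plan rather than a proof; it establishes nothing beyond what the paper already records (sharpness of $2n-2$ and finite verification), and the inductive framework you set up would still have to be supplied with its key lemma before any part of it could be checked.
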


We have been able to check that this stronger conjecture holds up to arch systems $A$ and $B$ of size 15. 
We further know that the size $2n-2$ is the smallest one for which such a conjecture could be true. 
Indeed, we have identified families of arch systems $A_n$ and $B_n$ of any size $n \geq 4$ 
such that the enumeration sequences of $\Av(A_n)$ and $\Av(B_n)$ coincide up to size $2n-3$ but differ at $2n-2$. 
These are described below. 

Let $\emptyArch^k$ denote the concatenation of $k$ empty arches. Now, for any $n \geq 4$, set $C_n = \archOver{\emptyArch^{n-4}}$, $A_n = \archOver{C_n \emptyArch \emptyArch}$, and $B_n = \emptyArch \emptyArch  \archOver{C_n}$. 
We claim that there is a size preserving bijection between $\Av(A_n)$ and $\Av(B_n)$ restricted to arch systems with at most $2n-3$ arches, 
but that there are more arch systems of size $2n-2$ avoiding $A_n$ than $B_n$. 

Observe that $A_n = \archOver{bA}$ and $B_n = A \archOver{b}$ for $b = C_n$ and $A = \emptyArch \emptyArch$. 
So the proof of Proposition~\ref{pr:comparisonArchSwitching} provides an injection $\varphi$ from $\Av(B_n)$ to $\Av(A_n)$. 
It is relatively easy to see that $\varphi$ is actually a bijection when restricted to arch systems with at most $2n-3$ arches. 
This essentially amounts to examining where these at most $2n-3$ arches can be in arch systems containing $C_n$ but avoiding $A_n$. 
It is also not hard to see that the arch system $\archOver{C_n \emptyArch} \archOver{C_n \emptyArch}$ of size $2n-2$ avoids $A_n$ but is not in the image of $\varphi$. 

To the best of our knowledge, this work is the first global approach to the study of Wilf-equivalences, 
a popular topic of research in the field of permutation patterns from its early days until now -- and arguably so in the wider context of hereditary classes of combinatorial structures. 
It is performed in the context of Catalan structures, or equivalently permutations avoiding 231 and another pattern $\pi$ -- 
which we could call \emph{principal subclasses} of $\Av(231)$. 
We believe that similar investigations, aiming at classifying all Wilf-equivalences between principal subclasses of (well-behaved) permutation classes
should be carried out. One promising example being considered by the first author, Cheyne Homberger and Jay Pantone is the class of separable permutations, $\Av(2413, 3142)$. This comment is motivated in part by the results of \cite{AAVSeparable} which provide a partial parallel of Proposition \ref{pr:classEnumeration} but more generally because the separable permutations permit several other ``well-structured'' representations.

We can even hope to extend our ideas further, to a partial classification of Wilf-equivalences between principal permutation classes, 
i.e.~classes of permutations defined by the avoidance of a single pattern. 
The framework of matchings with excluded sub-matchings, as defined in~\cite{Jelinek:Dyck}, could provide a good tool for that. 
Matchings are similar to arch systems, but were arches are allowed to cross. 
Namely, a matching of size $n$ is a set of $n$ arches connecting $2n$ points arranged along a baseline, 
with all arches above the baseline. 
Obviously, our families $\Av(A)$ of arch systems avoiding a given arch system $A$ can be seen as matchings with excluded sub-matchings: 
namely, those avoiding $\emptyArch \hspace{-8pt} \emptyArch$ and $A$. 
But (principal) permutation classes $\Av(\pi)$ can also be represented as matchings with excluded sub-matchings. 
Indeed, permutations are in immediate correspondence with matchings having all their arches opened before any arch is closed, 
or equivalently with matchings avoiding $\archOver{\emptyArch}$. 
Under this correspondence, a permutation class $\Av(\pi)$ is simply the class of matchings avoiding $\archOver{\emptyArch}$ and the matching encoding $\pi$. 
If it were possible to adapt our work to such cases, 
and in particular to provide an upper bound on the asymptotic number of Wilf-equivalence classes of principal permutation classes, 
this would be a major achievement in the field. 

\subsection*{Acknowledgements} 

Much of the work in this paper was supported by the software suite \emph{PermLab} \cite{PermLab1.0}, and extensions of it. 
Generating function computations were carried out in \emph{Mathematica} \cite{Mathematica}. 

We are grateful to Cheyne Homberger for pointing out references~\cite{Rudolph:Pattern} and~\cite{Chua:Equipopularity} to our attention. 

Mathilde Bouvel would like to thank the Department of Computer Science of the University of Otago for their hospitality and support 
in January and February of 2014 when most of this work was carried out.

\begin{bibdiv}
\begin{biblist}

\bib{PermLab1.0}{misc}{
  author   = {Albert, Michael},
  title    = {PermLab: Software for Permutation Patterns},
  year     =  {2012},
  note = {\url{http://www.cs.otago.ac.nz/PermLab}}
}

 \bib{AA05}{article}{
  author = {Albert, Michael},
  author = {Atkinson, Michael},
  title = {Simple permutations and pattern restricted permutations},
  journal = {Discrete Math.},
  volume = {300},
  number = {1-3},
  pages={1--15},
  year = {2005},
 }
 
 \bib{AAVSeparable}{article}{
   author={Albert, Michael},
   author={Atkinson, Michael},
   author={Vatter, Vincent},
   title={Subclasses of the separable permutations},
   journal={Bull. Lond. Math. Soc.},
   volume={43},
   date={2011},
   number={5},
   pages={859--870},
   issn={0024-6093},
   review={\MR{2854557}},
   doi={10.1112/blms/bdr022},
}
 
\bib{FPSAC2013}{article}{
 author = {Albert, Michael},
 author = {Bouvel, Mathilde},
 title = {Operators of equivalent sorting power and related Wilf-equivalences},
 journal = {DMTCS Proceedings (FPSAC)},
 volume = {AS},
 number = {671--682},
 year = {2013},
}

\bib{ABSorting}{article}{
   author = {{Albert}, Michael},
   author = {{Bouvel}, Mathilde},
    title = {Operators of equivalent sorting power and related Wilf-equivalences},
  journal = {Electronic Journal of Combinatorics},
     year = {to appear},
     note = {Available as \href{http://arxiv.org/abs/1402.2034v1}{arXiv:1402.2034v1}}
    }

    
\bib{ChW}{article}{
    author = {{Chow}, Timothy},
    author = {{West}, Julian},
    title = {Forbidden subsequences and Chebyshev polynomials},
    journal = {Discrete Mathematics},
    volume = {204},
    number = {119--128},
    year = {1999},
}    

\bib{Chua:Equipopularity}{article}{
 author = {Chua, Lynn},
 author = {Sankar, Krishanu Roy},
 title = {Equipopularity Classes of 132-Avoiding Permutations},
 journal = {Elec. J. Combin.},
 volume = {21(1)},
 note = {Article P1.59},
 year = {2014}
}

\bib{Pudwell:Trees}{article}{
 author = {Dairyko, Michael},
 author = {Pudwell, Lara},
 author = {Tyner, Samantha},
 author = {Wynn, Casey},
 title = {Non-Contiguous Pattern Avoidance in Binary Trees},
 journal = {Elec. J. Combin.},
 volume = {19(3)},
 note = {Article P22},
 year = {2012}
}


    \bib{Flajolet:Analytic}{book}{
   author={Flajolet, Philippe},
   author={Sedgewick, Robert},
   title={Analytic combinatorics},
   publisher={Cambridge University Press, Cambridge},
   date={2009},
   pages={xiv+810},
   isbn={978-0-521-89806-5},
   review={\MR{2483235 (2010h:05005)}},
   doi={10.1017/CBO9780511801655},
}

\bib{Harary:Twenty}{article}{
   author={Harary, Frank},
   author={Robinson, Robert W.},
   author={Schwenk, Allen J.},
   title={Twenty-step algorithm for determining the asymptotic number of
   trees of various species},
   journal={J. Austral. Math. Soc. Ser. A},
   volume={20},
   date={1975},
   number={4},
   pages={483--503},
   review={\MR{0406858 (53 \#10644)}},
}

\bib{Jelinek:Dyck}{article}{
  author={Jelinek, Vit},
  title={Dyck paths and pattern-avoiding matchings},
  journal={Europ. J. Combin.},
  volume={28},
  date={2007}
}

 \bib{Knuth:Art}{book}{
     author = {Knuth, Donald E.},
      title = {The Art of Computer Programming},
    edition = {Second edition},
       note = {Volume 1: Fundamental algorithms,
               Addison-Wesley Series in Computer Science and Information
               Processing},
  publisher = {Addison-Wesley Publishing Co., Reading,
               Mass.-London-Amsterdam},
       year = {1975},
      pages = {xxii+634 pp.}
 }
 

%
\bib{Mansour:Restricted}{article}{
 author = {Mansour, Toufik},
 author = {Vainshtein, Alek},
 title = {Restricted $132$-avoiding permutations},
 journal = {Adv. in Applied Math.},
 volume = {26},
 pages = {258--269},
 year = {2001}
}

\bib{Mansour:Chebyshev}{article}{
 author = {Mansour, Toufik},
 author = {Vainshtein, Alek},
 title = {Restricted permutations and Chebyshev polynomials},
 journal = {S\'eminaire Lotharingien de Combinatoire},
 volume = {47},
 note = {Article B47c},
 year = {2002}
}

\bib{Rudolph:Pattern}{article}{
 author = {Rudolph, Kate},
 title = {Pattern Popularity in 132-avoiding Permutations},
 journal = {Elec. J. Combin.},
 volume = {20(1)},
 note = {Article P8},
 year = {2013}
}

\bib{Mathematica}{misc}{
author = {{Wolfram Research, Inc.}}, 
title = {Mathematica, Version 9.0}, 
edition = {Version 9.0}, 
publisher = {Wolfram Research, Inc.}, 
date = {2012},
}
\end{biblist}
\end{bibdiv}

\end{document}